\newtheorem{theorem}{Theorem}
\newtheorem{proposition}[theorem]{Proposition}%
\newtheorem{example}{Example}%
\newtheorem{remark}{Remark}%
\newtheorem{lemma}{Lemma}
\newtheorem{hypothesis}{Hypothesis}
\newtheorem{corollary}{Corollary}
\def\e{{\rm e}}
\def\R{{\mathbb{R}}}
\newcommand{\ds}{\displaystyle}
\newcommand{\be}{\begin{equation}}
\newcommand{\ee}{\end{equation}}
\def\p{\partial}
\def\eeq#1{\begin{eqnarray}#1\end{eqnarray}}
\def\eeqn#1{\begin{eqnarray*}#1\end{eqnarray*}}
\def\vx{{\bf x}}
\def\vy{{\bf y}}
\def\vn{{\bf n}}
\def\vom{{\bm\omega}}
\def\SS{{\mathbb{S}^2}}
\def\eps{{\epsilon}}
\def\dd{\mathrm{d}}
\title{{\Huge Reflective Conditions for  Radiative Transfer  in Integral Form with H-Matrices}}
\author{Olivier Pironneau\footnote{ LJLL, Sorbonne Universit\'e and Acad\'emie des Sciences}~  and Pierre-Henri Tournier\footnote{ LJLL, Sorbonne Universit\'e}}
\begin{document}
\maketitle
\begin{abstract}
In a recent article the authors showed that the radiative Transfer equations with multiple frequencies and scattering can be formulated as a nonlinear  integral system.  In the present article, the formulation is extended to handle reflective boundary conditions. The fixed point method to solve the system is shown to be monotone. The discretization is done with a $P^1$ Finite Element Method. The convolution integrals are precomputed at every vertices of the mesh and stored in  compressed hierarchical matrices,  using Partially Pivoted Adaptive Cross-Approximation. Then the fixed point iterations involve only matrix vector products. The method is $O(N\sqrt[3]{N}\ln N)$, with respect to the number of vertices, when everything is smooth. A numerical implementation is proposed and tested on two examples. As there are some analogies with ray tracing the programming is complex.
\end{abstract}

\nocite{*}

\textbf{Keywords} {MSC classification 85A25, 37N30, 31A10, 35Q30, 68P30, 74S05, Radiative Transfer,  Reflective boundaries, Integral equation, {\cal H}-Matrix , Finite Element Methods. }

\section*{Introduction}
The Radiative Transport Equations (RTE)  describe the behavior of electromagnetic radiation in a domain $\Omega$ as it interacts with matter \cite{POM}. It is used to model a wide range of physical phenomena, including the propagation of light through plasma, tomography \cite{tarvainen}, atmospheric media \cite{MIH}, etc.

The RTE is derived from the basic principles of quantum and statistical mechanics; it is a partial differential equation (PDE) that describes the distribution of radiation intensity in space, time and frequencies, coupled with a budget balance equation (BBE) for the electronic temperature. The PDE takes into account both absorption and scattering of radiation by matter, as well as emission of radiation by sources, which, in the present case, will be restricted to the boundaries of the emitting material.

In \cite{DIA},\cite{FGOP3}  the authors have shown that the PDE can be converted into an integral equation for the total radiation at each point in the domain and that the coupling with the BBE can be handled by fixed point iterations. The method leads also to a general proof of existence, uniqueness and regularity of the solution. The difference with earlier studies such as \cite{fan} is in the coupling with the equation for the temperature, the BBE, or even the PDE for the temperature when diffusion is important.

In \cite{JCP} the authors have presented an implementation of the method using {\cal H}-Matrix  compression, a crucial ingredient which makes the evaluation of the integrals $O(N\sqrt[3]{N}\ln N)$ with respect to the number of vertices $N$ in the 3D mesh which discretizes the domain $\Omega$; $N\ln N$ is the complexity of the {\cal H}-Matrix approximations but each element of the matrix requires an integral along a line in the domain. Compared with a brute force solution of the equations as in \cite{JOL}, the integral method keeps a manageable computing time for problems with frequency dependent parameters.  However, it did not handle reflective boundary conditions \cite{siewert}.

\smallskip

{\cal H}-Matrix  compression \cite{hack}, \cite{beb},\cite{borm}, is a mathematical technique used to efficiently represent and manipulate large matrices that arise in a variety of applications. The technique uses a hierarchical structured representation of the matrices allowing fast and accurate numerical computations when the integrals have a convolution type integrand which decays with the distance.

{\cal H}-Matrix compression works by first defining a hierarchical geometric partitioning of the matrix into smaller and smaller submatrices. This so-called hierarchical \textit{block tree} is then traversed recursively and \textit{far-field} interaction blocks which verify  a \textit{geometric admissibility condition}\cite{beb} are compressed  by  using a low rank approximation. The resulting  {\cal H}-Matrix allows for efficient matrix-vector multiplications, among other operations of linear algebra. The technique is particularly important and popular for computational electromagnetics in integral form such as boundary element methods.

With the {\it Partially Pivoted Adaptive Cross-Approximation} (ACA) \cite{borm} only the needed  coefficients of the matrices are computed  ($r$ rows and $r$ columns, where $r$ is the rank of the approximation). However the theory requires geometrical smoothness \cite{beben}.

\smallskip

We have extended the implementation done in \cite{JCP} using \texttt{FreeFEM} \cite{FF} and \texttt{htool}\footnote{https://github.com/htool-ddm/htool}; \texttt{htool} is a parallel C++ toolbox implementing {\cal H}-Matrices, used in particular for the boundary element method in electromagnetism.
\texttt{FreeFEM} is a popular open-source software package for solving PDE systems by the finite element method (FEM). 

\texttt{FreeFEM} provides a wide range of pre-built FEM, as well as tools for mesh generation. It  has a dedicated high level programming language that allows users to meet their specific needs. \texttt{FreeFEM} also supports parallel computing with \texttt{mpi}.

One of the main advantages of \texttt{FreeFEM} for the present study is its ability to handle complex geometries and boundary conditions, especially thanks to its powerful automatic interpolation from volume to surface meshes.

Adding reflective conditions (RC) to the \texttt{FreeFEM} code presented in \cite{JCP} turned out to require solving the following difficulties:
\begin{itemize}
\item Integrate the RC into the integral formulation of the problem
\item Show that the fixed point iterations are still monotone.
\item Find a formulation compatible with the use of H-Matrices
\item Implement the method in the \texttt{FreeFEM} language.
\end{itemize}
This paper presents the solutions found to overcome these four difficulties. It ends with a numerical test proposed in \cite{koba}.

\section{The Radiative Transfer Equations}
The problem is formulated in a  domain $\Omega\subset\R^3$ with boundary $\Gamma$. The unit sphere in $\R^3$ is called $\SS$. One must find the radiation (called light from now on) intensity $I_\nu(\vx,\vom)$ at all points  $\vx\in\Omega$, for all directions all $\vom\in\SS$ and all frequencies $\nu\in\R_+$, satisfying:
\begin{align}
&\vom\cdot\nabla I_\nu+{\kappa_\nu} I_\nu={\kappa_\nu}(1-a_\nu)B_\nu(T)+{\kappa_\nu}a_\nu J_\nu\,,\quad J_\nu:=\tfrac1{4\pi}\int_{\SS}I_\nu\dd\vom\,,
\label{RTa}
\\
&\int_0^\infty{\kappa_\nu}(1-a_\nu)(J_\nu-B_\nu(T))\dd\nu=0\,,
\label{RTb}
 \\ &
I_\nu(\vx,\vom) = R_\nu(\vx,\vom) I_\nu(\vx,\vom-2(\vn\cdot\vom)\vn) + Q_\nu(\vx,\vom),
\cr & \hskip 3cm \hbox{ on }\Sigma:=\{(\vx,\vom)\in\Gamma\times\SS~:~\vom\cdot\vn(\vx)<0\,\},
\label{RTc}
\end{align}
where $B_\nu(T)=\frac{\nu^3}{\e^{\frac\nu T}-1}$ is the (rescaled) Planck function.  In the RC \eqref{RTc}, $R_\nu$ is the portion of light which is reflected and $Q_\nu$ is the light source; $\vn(\vx)$ is the outer normal of $\Gamma$ at $\vx$. $\kappa_\nu>0$ and $a_\nu\in[0,1]$ are the absorption and scattering coefficients; in general they depend on $\nu$ and $\vx$.

\begin{example}
If an object $\cal O$ inside a box $\cal B$ radiates because it is at temperature $T_0$, then, $\Omega={\cal B}\backslash {\cal O}$, $Q_\nu=Q^0 [\vom\cdot\vn]_- B_\nu(T_0)$ on ${\cal O}$ and zero elsewhere  and 
$\Sigma\subset\partial {\cal B}\times \SS$. {\rm As usual $[f]_-=-\min(f,0)$.}
\end{example}

\subsection{An Integral Formulation}
For clarity we drop the subscript $\nu$ on $\kappa,~ a$ and $I$. Assume that $\Omega$ is bounded and convex (see remark \ref{rem1}). Let
\begin{equation}\label{eqS}
S_\nu(\vx)=\kappa(1-a)B_\nu(T)+\kappa a J_\nu,
\end{equation}
For a given $\vx$ and $\vom$, let $\tau_{\vx,\vom}$ be such that $(\vx_\Sigma(\vx,\vom):=\vx-\tau_{\vx,\vom}\vom,\vom)\in\Sigma$; the method of characteristics tells us that
\begin{equation}\label{charact}
\begin{aligned}
I(\vx,\vom) &= I(\vx_\Sigma(\vx,\vom),\vom)\e^{-\int_0^{\tau_{\vx,\vom}}\kappa(\vx-\vom s)\dd s}
+
\int_0^{\tau_{\vx,\vom}} \e^{-\int_0^s\kappa(\vx-\vom s')\dd s'} S_\nu(\vx-\vom s)\dd s.
\end{aligned}
\end{equation}
Notice that $\tau_{\vx,\vom}=|\vx_\Sigma-\vx|$ (see Figure \ref{sketch}), 
therefore, let
\begin{equation}\label{gen1}
\begin{aligned}
J_\nu(\vx)&:=\tfrac1{4\pi}\int_\SS I(\vx,\vom)\dd\omega
=S^E_\nu(\vx) + {\mathcal J}[S_\nu](\vx)\quad \text{ with}
\cr
S^E_\nu(\vx)&:=\tfrac1{4\pi}\int_\SS I(\vx_\Sigma(\vx,\vom),\vom)\e^{-\int_0^{\tau_{\vx,\vom}}\kappa(\vx-\vom s)\dd s}\dd\omega, 
\cr
{\mathcal J}[S](\vx)&:=
\tfrac1{4\pi}\int_\SS\int_0^{\tau_{\vx,\vom}} \e^{-\int_0^s\kappa(\vx-\vom s')\dd s'} S(\vx-\vom s)\dd s\dd\omega
\\&
=
 \tfrac1{4\pi}\int_{\Omega} S(\vy)\frac{\e^{-\int_{[\vx,\vy]}\kappa}}{|\vy-\vx|^2}\dd\vy,
 \end{aligned}
\end{equation}
where $\vom'(\vom):= \vom-2(\vn\cdot\vom)\vn$ and  $\ds\int_{[\vx,\vy]}f:=|\vy-\vx|\int_0^1f(s\vy+(1-s)\vx)\dd s$.

To justify the last formula we refer to the following lemma with $\Psi(\vx,\vy)=S(\vy)\e^{-\int_{[\vx,\vy]}\kappa}$. Again, for clarity, we drop the first argument $\vx$.
\begin{lemma}\label{lem:one}
Let $\Omega$ be a convex bounded open set of $\R^3$; let $\Gamma$ be its boundary. Let $\Psi:\Omega\mapsto \R$ be continuous. Let $\tau_{\vx,\vom}\ge 0$ be such that $\vx-\tau_{\vx,\vom}\vom\in\Gamma$, $\vx\in\Omega$. Then 
\begin{equation*}
\int_\SS\int_0^{\tau_{\vx,\vom}}\Psi(\vx-\vom s)\dd s\dd\omega
=\int_{\Omega} \frac{\Psi(\vy)}{|\vy-\vx|^2}\dd\vy.
\end{equation*}
\end{lemma}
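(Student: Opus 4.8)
The plan is to recognize the left-hand side as the integral of $\Psi$ over $\Omega$ written in spherical coordinates centered at the fixed point $\vx$, and then to recover the right-hand side by converting back to Cartesian coordinates. Concretely, I would introduce the map
\[
\Phi:(s,\vom)\longmapsto \vy=\vx-s\,\vom,
\]
which to a radial distance $s\ge 0$ and a direction $\vom\in\SS$ associates the point $\vy$ at distance $s$ from $\vx$ along the ray pointing in direction $-\vom$. Its inverse is $s=|\vy-\vx|$, $\vom=(\vx-\vy)/|\vx-\vy|$, well defined for $\vy\neq\vx$.

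The first step is to verify that $\Phi$ parametrizes $\Omega$ exactly once, and this is where convexity of $\Omega$ is essential. For each direction $\vom$, convexity guarantees that the ray $\{\vx-s\vom:s\ge 0\}$ meets $\Gamma$ at a single point, reached precisely at $s=\tau_{\vx,\vom}$, and that the open segment $\{\vx-s\vom:0<s<\tau_{\vx,\vom}\}$ lies entirely in $\Omega$. Hence, as $(\vom,s)$ ranges over $\SS\times(0,\tau_{\vx,\vom})$, the image $\vy=\Phi(s,\vom)$ sweeps out $\Omega\setminus\{\vx\}$, a set of full measure in $\Omega$, and does so injectively. This domain of $(\vom,s)$ is exactly the range of integration appearing on the left.

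It then remains to compute the Jacobian of $\Phi$, which is the standard spherical volume element: $\dd\vy=s^2\,\dd s\,\dd\omega$, so that $\dd s\,\dd\omega=|\vy-\vx|^{-2}\,\dd\vy$. Substituting $\Psi(\vx-s\vom)=\Psi(\vy)$ together with this Jacobian into the left-hand integral produces precisely $\int_\Omega \Psi(\vy)\,|\vy-\vx|^{-2}\,\dd\vy$, which is the asserted identity.

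The point needing the most care is the apparent singularity at $\vy=\vx$, where the kernel $|\vy-\vx|^{-2}$ blows up. I would emphasize that the factor $s^2$ in the Jacobian cancels it exactly, so that the change of variables is legitimate on $\SS\times(0,\tau_{\vx,\vom})$ and the right-hand integral is absolutely convergent: continuity, hence local boundedness, of $\Psi$ makes $|\vy-\vx|^{-2}$ integrable near $\vx$ in three dimensions, the radial measure behaving like $s^2\,\dd s$. The only other subtlety is the rigorous justification that $\Phi$ is a bijection up to null sets, and convexity (through the single-valued, finite exit distance $\tau_{\vx,\vom}$) does all the work here; without convexity the ray could re-enter $\Omega$ and the one-to-one correspondence would fail.
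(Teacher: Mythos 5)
Your proof is correct and takes essentially the same route as the paper's: both arguments amount to the spherical change of variables centered at $\vx$, with the volume element $\dd\vy = s^2\,\dd s\,\dd\omega$ whose factor $s^2$ exactly cancels the kernel $|\vy-\vx|^{-2}$, convexity guaranteeing that the rays sweep $\Omega$ injectively up to the null set $\{\vx\}$. The only difference is one of execution, not of idea: the paper verifies the Jacobian by a Riemann-sum argument, identifying $|\vx-\vx_n|^2\,|\vx_{n+1}-\vx_n|\cos\varphi\,\dd\varphi\,\dd\theta$ as the elementary volume between concentric spheres, whereas you invoke the change-of-variables theorem directly and make explicit the injectivity and singularity-integrability points that the paper leaves implicit.
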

\begin{proof}:~
Denote $\tilde\Psi$ the extension of $\Psi$ by zero outside $\Omega$.
Let $\vom=(\cos\theta\sin\varphi,\sin\theta\sin\varphi,\cos\varphi)^T$, $\theta\in(0,2\pi)$, $\varphi\in(-\frac\pi 2,\frac\pi 2)$. Consider a partition of the semi infinite line starting at $\vx$ in direction $-\vom$ into segments of size $\delta s$ and denote $\vx_n= \vx-n\delta s \vom$. Then 
\begin{equation}\label{sunlight}
\begin{aligned}
\int_\SS\int_0^{\tau_{\vx,\vom}}&\tilde\Psi(\vx-\vom s)\dd s\dd\omega
=\lim_{\delta s\to 0}\sum_{n>0}\delta s\int_0^{2\pi}\int_{-\frac\pi 2}^{\frac\pi 2} \tilde\Psi(\vx_n)\cos\varphi\dd\varphi \dd\theta
\cr&
=\lim_{\delta s\to 0}\sum_{n>0}\int_0^{2\pi}\int_{-\frac\pi 2}^{\frac\pi 2} \frac{\tilde\Psi(\vx_n)}{|\vx-\vx_n|^2} |\vx-\vx_n|^2|\vx_{n+1}-\vx_n|\cos\varphi\dd\varphi \dd\theta.
 \end{aligned}
\end{equation}
We note that $|\vx-\vx_n|^2|\vx_{n+1}-\vx_n|\cos\varphi\dd\theta\dd\varphi$ is the elementary volume in the sector $\dd\theta\dd\varphi$ between the spheres centered at $\vx$ and of radii $|\vx-\vx_n|$ and $|\vx-\vx_{n+1}|$.  Therefore the right-hand side is an integral in $\vy\in\R^3$ of  $\frac{\tilde\Psi(\vy)}{|\vx-\vy|^2} |\vx-\vx_n|^2$.
\end{proof}
\begin{remark}\label{rem1}
When $\Omega$ is not convex, on may apply the lemma to its convex closure $\bar\Omega$ with $\kappa$ extended to $+\infty$ in $\bar\Omega\backslash\Omega$.
\end{remark}
\begin{remark}
When $R_\nu\equiv 0$, $S^E$ is given by  \eqref{gen1} with $Q_\nu$ in place of $I$. As \eqref{RTb}defines a map ${\cal T}:J\mapsto T$, 
\[
T(\vx)={\cal T}[J_\nu](\vx),~~\forall \vx\in\Omega,
\]
then, \eqref{eqS}, \eqref{gen1} is a nonlinear integral formulation for  $J$:
\begin{equation}
J_\nu(\vx)=S^E_\nu(\vx) + {\mathcal J}[\kappa(1-a)B_\nu({\cal T}[J_\nu])+\kappa a J_\nu](\vx),~~\forall \vx\in\Omega.
\end{equation}
\end{remark}
The following fixed point method  was shown in \cite{DIA} to be  monotone and convergent:
\begin{equation}
J^{k+1}_\nu(\vx)=S^E_\nu(\vx) + {\mathcal J}[\kappa(1-a)B_\nu({\cal T^k}[J^k_\nu](\vx))+\kappa a J^k_\nu](\vx),~~k=0,1,\dots
\end{equation}
Let us extend these properties to the RTE with RC.
For clarity let $\vx_\Sigma$ be short for $\vx_\Sigma(\vx,\vom)$ and  let 
\[
\vom'(\vom) :=\vom-2\vom\cdot\vn(\vx')\;\vn(\vx'), \quad 
\vx'_\Sigma := \vx_\Sigma(\vx_\Sigma(\vx,\vom),\vom')
\quad 
\text{ with }\vom:=\frac{\vx-\vx'}{|\vx-\vx'|}.
\]
Let us insert  \eqref{charact} and \eqref{RTc} in \eqref{gen1}. Then,
\begin{equation*}
\begin{aligned}&
 S_\nu^E(\vx)=S^E_{\nu,1}+S^E_{\nu,2}+S^E_{\nu,3} \hbox{ with }
\cr&
S^E_{\nu,1}(\vx) := \tfrac1{4\pi}\int_\SS Q_\nu(\vx_\Sigma,\vom)\e^{-\int_0^{\tau_{\vx,\vom}}\kappa(\vx-\vom s)\dd s}\dd\omega, 
 \cr & 
 S^E_{\nu,2}(\vx):= \tfrac1{4\pi}\int_\SS R_\nu(\vx_\Sigma,\vom) Q_\nu(\vx'_\Sigma,\vom') \left[\e^{-\int_0^{\tau_{\vx_\Sigma,\vom'}}\kappa(\vx_\Sigma-\vom' s)\dd s}
 \right. \cr & \hskip 8cm\left. 
 \e^{-\int_0^{\tau_{\vx,\vom}}\kappa(\vx-\vom s)\dd s}\right]\dd\omega,
\cr&
S^E_{\nu,3}(\vx):= \tfrac1{4\pi}\int_\SS\left[
   R_\nu(\vx_\Sigma,\vom)\e^{-\int_0^{\tau_{\vx,\vom}}\kappa(\vx-\vom s')\dd s'}
\right.
\\ & \hskip 4cm
\left.   
   \int_0^{\tau_{\vx_\Sigma,\vom'}} \e^{-\int_0^s\kappa(\vx_\Sigma-\vom' s')\dd s'} S_\nu(\vx_\Sigma-\vom' s)\dd s\; \right]\dd\omega.
 \end{aligned}
\end{equation*}
\begin{hypothesis}\label{H1}
Let us rule out multiple reflections and focal points:
\begin{enumerate}
\item
If $R_\nu(\vx_\Sigma(\vx,\vom),\vom)>0$, then $R_\nu(\vx_\Sigma(\vx_\Sigma(\vx,\vom),\vom'),\vom)=0$.
\item
Given $\vx$ and $\vy$, there is only a finite number $M$ of $\vx'_n\in\Gamma$ such that $[\vx'_n,\vy]$ is the reflected ray of $[\vx,\vx'_n]$. {\rm Note that $\vx'_n$ depends on $\vx$ and $\vy$.}
\end{enumerate}
\end{hypothesis}

\begin{proposition}\label{prop1}
Under Hypothesis \ref{H1}
\[
S^E_{\nu,3}(\vx):= \sum_{n=1}^M\tfrac{1}{4\pi}\int_\Omega R_\nu(\vx'_n,\tfrac{\vx-\vx'_n}{|\vx-\vx'_n|}) \frac{\e^{-\int_{[\vx,\vx'_n]\cup[\vx'_n,\vy]}\kappa}}{(|\vx-\vx'_n|+|\vx'_n-\vy|)^2}S(\vy)\dd\vy.
\]
\end{proposition}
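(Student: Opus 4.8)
The plan is to convert the double integral over $(\vom,s)\in\SS\times(0,\tau_{\vx_\Sigma,\vom'})$ defining $S^E_{\nu,3}$ into a single volume integral over $\vy\in\Omega$, exactly as Lemma~\ref{lem:one} does for the direct (unreflected) ray, the only new ingredient being that the effective distance is now the total \emph{unfolded} length of the broken ray $\vx\to\vx_\Sigma\to\vy$. The device that makes this work is the virtual image of $\vx$ in the tangent plane at the reflection point.

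First I would fix $\vom$ and name the objects along the reflected ray: $\vx':=\vx_\Sigma(\vx,\vom)\in\Gamma$, the reflected direction $\vom'=\vom-2(\vom\cdot\vn(\vx'))\vn(\vx')$, and the running point $\vy:=\vx'-\vom' s$ for $s\in(0,\tau_{\vx',\vom'})$, so that $|\vx'-\vy|=s$. Since $\vx-\vx'=\tau\vom$ with $\tau:=|\vx-\vx'|$, one has $\vom=\tfrac{\vx-\vx'}{|\vx-\vx'|}$, which is precisely the second argument of $R_\nu$ in the claimed formula, and the two attenuation exponentials multiply to $\e^{-\int_{[\vx,\vx']\cup[\vx',\vy]}\kappa}$ by additivity of the line integral along the incident segment $[\vx,\vx']$ and the reflected segment $[\vx',\vy]$.

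Next comes the geometric heart of the argument. Introduce the mirror image $\vx^*:=\vx-2((\vx-\vx')\cdot\vn(\vx'))\vn(\vx')$. A direct computation gives $\vx^*=\vx'+\tau\vom'$ and hence $\vy-\vx^*=-(\tau+s)\vom'$, so $\vx^*,\vx',\vy$ are colinear and
\[
|\vx^*-\vy|=\tau+s=|\vx-\vx'|+|\vx'-\vy|,\qquad \vy=\vx^*-(\tau+s)\vom'.
\]
Thus the broken ray, once unfolded, is a \emph{straight} ray issuing from $\vx^*$ in direction $-\vom'$, and I would now replace the variables $(\vom,s)$ by $(\vom',r)$ with $r:=\tau+s$. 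For a planar reflecting facet (as on the faces of the box ${\cal B}$ of the Example) $\vn$ is constant, so $\vx^*$ is a fixed point and $\vom\mapsto\vom'$ is an isometry of $\SS$; the map $(\vom,s)\mapsto(\vom',r)$ is then block triangular with $\partial r/\partial s=1$ and a unit angular block, hence has unit Jacobian and $\dd\omega\,\dd s=\dd\omega'\,\dd r$. Applying Lemma~\ref{lem:one} with centre $\vx^*$ to the straight ray $\vy=\vx^*-r\vom'$ yields $\dd\omega'\,\dd r=\dd\vy/|\vx^*-\vy|^2$, i.e.
\[
\dd\omega\,\dd s=\frac{\dd\vy}{(|\vx-\vx'|+|\vx'-\vy|)^2},
\]
which is exactly the kernel of the proposition.

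Finally I would account for the sum and flag the main difficulty. For a fixed target $\vy$, Hypothesis~\ref{H1}(2) guarantees that only finitely many boundary points $\vx'_1,\dots,\vx'_M$ send $\vx$ to $\vy$ after one reflection, so the map $(\vom,s)\mapsto\vy$ is $M$-to-one and summing the preceding local identity over its preimages produces the factor $\sum_{n=1}^M$, with $\vx'_n$ inserted in $R_\nu$ and in the kernel; Hypothesis~\ref{H1}(1) rules out a second bounce, so no further terms appear. The delicate point is the Jacobian step: it is exact only when the virtual source $\vx^*$ is a genuine fixed point, i.e. for planar facets. On a curved portion of $\Gamma$ the normal $\vn(\vx')$ turns as $\vom$ varies, the reflected bundle becomes astigmatic and $\vx^*$ smears into a pair of focal segments, so the clean $(\tau+s)^2$ law acquires curvature corrections; the directions where the bundle collapses are the \emph{focal points} excluded in Hypothesis~\ref{H1}. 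The crux of a fully rigorous proof is therefore to show that, away from these excluded directions, $(\vom,s)\mapsto\vy$ is a finite-to-one local diffeomorphism with the stated Jacobian — immediate for the piecewise-planar geometries used in the numerical tests, and otherwise requiring a local curvature analysis at each $\vx'_n$.
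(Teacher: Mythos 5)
Your proof is correct and takes essentially the same route as the paper: the paper's own (much terser) proof likewise unfolds the broken ray and invokes Lemma~\ref{lem:one} with the total length $|\vx-\vx_\Sigma|+|\vx_\Sigma-\vy|$, the virtual image $\bar\vx$ being made explicit only later in the proof of Proposition~\ref{prop2}, and Hypothesis~\ref{H1} supplying both the finite sum over the $\vx'_n$ and the exclusion of a second bounce. Your explicit mirror-point computation, the unit-Jacobian check, and the honest caveat that the $(|\vx-\vx'_n|+|\vx'_n-\vy|)^{-2}$ law is exact only for planar facets (with curvature corrections and focal degeneracies otherwise) fill in precisely what the paper leaves implicit in its clause ``provided that $[\vx_\Sigma,\vy]$ is reflected from $[\vx,\vx_\Sigma]$,'' consistent with the paper restricting its numerical validation to plane reflective boundaries.
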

\begin{proof}
Let $\vx(s):=\vx_\Sigma-\omega's$. By Lemma \ref{lem:one}, 
\begin{equation*}
\begin{aligned}
\int_\SS \int_0^{\tau_{\vx_\Sigma,\vom'}}&\left[
   R_\nu(\vx_\Sigma,\vom)\e^{-\int_{[\vx,\vx_\Sigma]\cup[\vx_\Sigma,\vx(s)]}\kappa}
   S(\vx(s))\dd s\; \right]\dd\omega 
\cr&
= \int_\Omega R_\nu(\vx_\Sigma,\vom) S(\vy)\frac{\e^{-\int_{[\vx,\vx_\Sigma ]\cup[\vx_\Sigma ,\vy]}\kappa}}{(|\vx-\vx_\Sigma |+|\vx_\Sigma -\vy|)^2}\dd\vy,
 \end{aligned}
\end{equation*}
provided that $[\vx_\Sigma,\vy]$ is reflected from $[\vx,\vx_\Sigma]$.
Now, by hypothesis, if $\vx$ and $\vy$ are given in $\Omega$ there are only a finite number of $\vx_\Sigma\in\Gamma$ for which $[\vx_\Sigma,\vy]$ is reflected from $[\vx,\vx_\Sigma]$, (see Figure \ref{sketch}).  
\end{proof}
\begin{proposition}\label{prop2}
Let Hypothesis \ref{H1} hold. Then the source terms from the boundaries are
\begin{align}& \label{SE1}
S^E_{\nu,1}(\vx) =
 \tfrac{1}{4\pi}\int_\Gamma Q_\nu(\vy,\tfrac{\vy-\vx}{|\vy-\vx|})
 \frac{[(\vy-\vx)\cdot\vn(\vy)]_-}{|\vy-\vx|^3}\e^{-\int_{[\vx,\vy]}\kappa}\dd\Gamma(\vy),
\\& \label{SE2}
S^E_{\nu,2}(\vx) =
 \sum_{n=1}^M\tfrac{1}{4\pi}\int_\Gamma  R_\nu(\vx'_n,\tfrac{\vx-\vx'_n}{|\vx-\vx'_n|})Q_\nu(\vy,\tfrac{\vx'_n-\vy}{|\vx'_n-\vy|})
 \cr&
 \hskip 4cm
 \frac{[(\vx'_n-\vy)\cdot\vn(\vy)]_- \e^{-\int_{[\vx,\vx'_n]\cup[\vx'_n,\vy]}\kappa}}{|\vx'_n-\vy|\;(|\vx-\vx'_n|+|\vx'_n-\vy|)^2}
 \dd\Gamma(\vy).
 \end{align}
{\rm  Recall that $\vx'_n$ depends on $\vy$.}
\end{proposition}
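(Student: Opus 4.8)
The plan is to derive both \eqref{SE1} and \eqref{SE2} by the very device that produced the volume formulas of Lemma \ref{lem:one} and Proposition \ref{prop1}, namely a change of variables turning the solid-angle integral over $\SS$ into a surface integral over $\Gamma$. The single new ingredient, absent from Lemma \ref{lem:one}, is the Jacobian relating an elementary solid angle $\dd\omega$ seen from $\vx$ to the boundary element $\dd\Gamma(\vy)$ it subtends: for a patch around $\vy\in\Gamma$ at distance $|\vy-\vx|$ with outer normal $\vn(\vy)$, the projected area perpendicular to the line of sight is $\dd\Gamma(\vy)\,|(\vy-\vx)\cdot\vn(\vy)|/|\vy-\vx|$, whence $\dd\omega=\frac{|(\vy-\vx)\cdot\vn(\vy)|}{|\vy-\vx|^3}\dd\Gamma(\vy)$. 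I would establish this exactly as in the proof of Lemma \ref{lem:one}, replacing the radial thickness $|\vx_{n+1}-\vx_n|$ of the elementary volume by this projection of the boundary patch; the inflow condition $\vom\cdot\vn<0$ on $\Sigma$ then fixes which one-sided part, written $[\,\cdot\,]_-$, survives.

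For \eqref{SE1} the conclusion is then immediate. Since $\Omega$ is convex, $\vom\mapsto\vy=\vx_\Sigma(\vx,\vom)$ is a bijection of $\SS$ onto $\Gamma$; along the corresponding ray $\vom=(\vx-\vy)/|\vx-\vy|$ and $\tau_{\vx,\vom}=|\vy-\vx|$, so that $\int_0^{\tau_{\vx,\vom}}\kappa(\vx-\vom s)\dd s=\int_{[\vx,\vy]}\kappa$. Substituting the Jacobian above into the definition of $S^E_{\nu,1}$ reproduces \eqref{SE1}, the factor $|(\vy-\vx)\cdot\vn(\vy)|\,/\,|\vy-\vx|^3$ being precisely the solid-angle-to-area conversion.

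For \eqref{SE2} the relevant ray is the broken one $\vx\to\vx_\Sigma\to\vy$, reflected once at $\vx_\Sigma$, with the source $Q_\nu$ carried at the second boundary hit $\vy=\vx'_\Sigma$. I would again change variables $\vom\mapsto\vy\in\Gamma$, but now unfold the specular reflection exactly as in Proposition \ref{prop1}: by part (2) of Hypothesis \ref{H1} the map is finite-to-one, with fibres indexed by the reflection points $\vx'_n$, which is the origin of the sum $\sum_{n=1}^M$. The geometric spreading of the reflected bundle is governed by the total unfolded length $|\vx-\vx'_n|+|\vx'_n-\vy|$, giving the factor $(|\vx-\vx'_n|+|\vx'_n-\vy|)^{-2}$ already present in Proposition \ref{prop1}; the absorption over the two legs assembles into $\e^{-\int_{[\vx,\vx'_n]\cup[\vx'_n,\vy]}\kappa}$; and because the source now sits on $\Gamma$ rather than in the volume, the final leg arrives at $\vy$ along $\vom'\parallel(\vx'_n-\vy)$ and contributes the extra projected-area factor $[(\vx'_n-\vy)\cdot\vn(\vy)]_-/|\vx'_n-\vy|$. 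Collecting the reflection coefficient $R_\nu(\vx'_n,\cdot)$ then yields \eqref{SE2}.

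The two substitutions and the bookkeeping of exponentials are routine; the real work is the Jacobian of the reflected map in \eqref{SE2}. The cleanest justification factors that map as the boundary change of variables of \eqref{SE1}, applied from $\vx_\Sigma$ in the reflected direction $\vom'$ (producing $[(\vx'_n-\vy)\cdot\vn(\vy)]_-/|\vx'_n-\vy|^3$), composed with the reflected-ray unfolding of Proposition \ref{prop1}, which replaces the final-leg spreading $|\vx'_n-\vy|^{-2}$ by $(|\vx-\vx'_n|+|\vx'_n-\vy|)^{-2}$ while leaving the projected-area power $|\vx'_n-\vy|^{-1}$ untouched. One must check that specular reflection preserves the solid angle of the bundle, so that only the total path length enters the $1/r^2$ spreading, and that part (1) of Hypothesis \ref{H1} excludes the double counting a second reflection would create. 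Verifying this unfolding is where I expect the main difficulty to lie.
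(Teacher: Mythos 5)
Your proposal is correct and takes essentially the same route as the paper: the solid-angle-to-surface Jacobian $\dd\omega=\frac{[(\vy-\vx)\cdot\vn(\vy)]_-}{|\vy-\vx|^3}\dd\Gamma(\vy)$ gives \eqref{SE1}, and your unfolding of the single specular reflection is exactly the paper's argument, which reads the solid angle of the source boundary from the mirror point $\bar\vx$ of $\vx$ across the tangent plane at $\vx'_n$, so that $|\bar\vx-\vy|=|\vx-\vx'_n|+|\vx'_n-\vy|$ while the direction cosine at $\vy$ is that of $\vx'_n-\vy$, yielding precisely the factor $[(\vx'_n-\vy)\cdot\vn(\vy)]_-/\bigl(|\vx'_n-\vy|\,(|\vx-\vx'_n|+|\vx'_n-\vy|)^2\bigr)$ in \eqref{SE2}. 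The solid-angle preservation you flag as the main remaining difficulty is exactly what this mirror-point device settles, so there is no gap.
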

\begin{proof}:
Recall that a solid angle integral at $\vx$ of a surface $\Sigma$ is
\[
\int_\SS f(\vx,\vx')\dd\vom(\vx') = \int_\Sigma f(\vx,\vx')\frac{[(\vx-\vx')\cdot\vn(\vx')]_-}{|\vx-\vx'|}\frac{\dd\Sigma(\vx')}{|\vx-\vx'|^2}.
\]
Therefore, from the definition of $S^E_{\nu,2}$ above we see that \eqref{SE1} holds.

To prove \eqref{SE2} we start from the definition of $S^E_{\nu,2}$ above.  
 For clarity let us assume that $Q_\nu$ and $R_\nu$ do not depend on $\vom$.

Observe that if a ray from $\vx$ in the direction $-\vom$ does not hit, after reflection  at $\vx'$ on some $\Gamma_R$, a boundary $\Gamma_Q$ at $\vy$ where $Q_\nu(\vy)$ is non zero, then $\vom$ does not contribute to $S^E_{\nu,2}$. Thus, we can use the solid angle of $\Gamma_Q$. However the solid angle is not seen from $\vx$ but from $\bar\vx$, the symmetric of $\vx$ with respect to the tangent plane to $\Gamma_R$ at $\vx'$.  As the distance from $\bar\vx$ to $\vy$ is also $|\vx-\vx'|+|\vx'-\vy|$, we obtain \eqref{SE2}.
\end{proof}
\begin{corollary}
\begin{equation}
J_\nu(\vx) =\bar S^E_\nu(\vx) + \bar{\mathcal J}[S_\nu](\vx),
\end{equation}
with $\bar S^E_\nu(\vx):=S^E_{\nu,1}(\vx)+S^E_{\nu,2}(\vx)$ given by Proposition \ref{prop2} and 
\begin{equation}
\begin{aligned}&
\bar{\mathcal J}[S](\vx) = 
 \tfrac1{4\pi}\int_{\Omega} \left[\frac{\e^{-\int_{[\vx,\vy]}\kappa}}{|\vy-\vx|^2}
 +\sum_{n=1}^M \frac{\e^{-\int_{[\vx,\vx'_n]\cup[\vx'_n,\vy]}\kappa}}{(|\vx-\vx'_n|+|\vx'_n-\vy|)^2}
 R_\nu(\vx'_n,\tfrac{\vx-\vx'_n}{|\vx-\vx'_n|}) \right]S(\vy)\dd\vy.
\end{aligned}
\end{equation}
\end{corollary}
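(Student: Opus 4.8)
The plan is to assemble the claimed representation by collecting the pieces already computed, so that the proof reduces to a bookkeeping of the two kinds of contributions: boundary sources and volume sources. I would start from the identity \eqref{gen1}, namely $J_\nu(\vx)=S^E_\nu(\vx)+\mathcal{J}[S_\nu](\vx)$, and recall that substituting the characteristic formula \eqref{charact} together with the reflective condition \eqref{RTc} into the definition of $S^E_\nu$ splits it into the three terms $S^E_\nu=S^E_{\nu,1}+S^E_{\nu,2}+S^E_{\nu,3}$ introduced above. The whole content of the corollary is then to regroup these four terms, $S^E_{\nu,1}$, $S^E_{\nu,2}$, $S^E_{\nu,3}$ and $\mathcal{J}[S_\nu]$, into one boundary part and one volume part.

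First I would set $\bar S^E_\nu:=S^E_{\nu,1}+S^E_{\nu,2}$ and invoke Proposition \ref{prop2} verbatim: this is exactly the pair of surface integrals \eqref{SE1}--\eqref{SE2}, so no further work is needed for the boundary term beyond citing that proposition.

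Second, and this is the only step with any algebraic content, I would combine the two volume contributions $\mathcal{J}[S_\nu]$ and $S^E_{\nu,3}$. By \eqref{gen1}, $\mathcal{J}[S_\nu](\vx)=\tfrac1{4\pi}\int_\Omega S_\nu(\vy)\,\e^{-\int_{[\vx,\vy]}\kappa}|\vy-\vx|^{-2}\dd\vy$, and by Proposition \ref{prop1}, $S^E_{\nu,3}$ is the finite sum over $n$ of volume integrals with kernel $R_\nu(\vx'_n,\tfrac{\vx-\vx'_n}{|\vx-\vx'_n|})\,\e^{-\int_{[\vx,\vx'_n]\cup[\vx'_n,\vy]}\kappa}(|\vx-\vx'_n|+|\vx'_n-\vy|)^{-2}$. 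Since both are integrals over the same domain $\Omega$ in the same variable $\vy$ against the same factor $S_\nu(\vy)$, I would write them under a single integral sign and factor out $S_\nu(\vy)\dd\vy$; the bracketed sum of kernels is then precisely the definition of $\bar{\mathcal{J}}[S_\nu]$. Adding $\bar S^E_\nu$ and $\bar{\mathcal{J}}[S_\nu]$ reproduces $J_\nu(\vx)$ and closes the proof.

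The step that requires care, though it is handled by the hypotheses rather than by computation here, is the legitimacy of writing $S^E_{\nu,3}$ as a volume integral with a $\vy$-dependent reflection point $\vx'_n$. This is exactly where Hypothesis \ref{H1}, part (2) (finiteness of the reflection points $\vx'_n$ for each pair $\vx,\vy$) and part (1) (no double reflections) enter, and it has already been absorbed into the statement of Proposition \ref{prop1}. Consequently the present corollary carries no genuine obstacle: it is a collation of Propositions \ref{prop1} and \ref{prop2} with \eqref{gen1}, and the main thing to verify is that the kernels, exponentials and distance factors match term by term.
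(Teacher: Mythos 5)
Your proof is correct and follows exactly the route the paper intends: the corollary is stated without proof precisely because it is the direct collation of \eqref{gen1}, the splitting $S^E_\nu=S^E_{\nu,1}+S^E_{\nu,2}+S^E_{\nu,3}$, and Propositions \ref{prop1} and \ref{prop2}, with $\mathcal{J}[S_\nu]$ and $S^E_{\nu,3}$ merged under one integral sign. You also correctly identify that Hypothesis \ref{H1} is what licenses the volume-integral form of $S^E_{\nu,3}$ and that it is already absorbed into Proposition \ref{prop1}, so nothing further is needed.
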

\subsection{Example}\label{exemple}
Assume that $\Gamma=\Gamma_Q\cup\Gamma_R$ and $Q_\nu(\vx,\vom)=[\vom\cdot\vn(\vx)]_-\; Q^0$ with $Q^0>0$ on $\Gamma_Q$ and $0$ on $\Gamma_R$. Assume $R_\nu(\vx,\vom)=R^0$ with $R_0>0$ on $\Gamma_R$ and $0$ on $\Gamma_Q$. Assume that there is never more than one reflection point on $\Gamma_R$, i.e. $M=1$. Then 
\begin{equation*}
\begin{aligned}&
\bar S^E_{\nu}(\vx) =
 \frac{Q^0}{4\pi}\int_{\Gamma_Q} \left[\left(\frac{[(\vy-\vx)\cdot\vn(\vy)]_-}{|\vy-\vx|^2}\right)^2\e^{-\int_{[\vx,\vy]}\kappa}\right.
\cr&
\hskip5cm
+\left. R^0
 \frac{([(\vx'_1-\vy)\cdot\vn(\vy)]_-)^2 \e^{-\int_{[\vx,\vx'_1]\cup[\vx'_1,\vy]}\kappa}}{|\vx'_1-\vy|^2\;(|\vx-\vx'_1|+|\vx'_1-\vy|)^2}\right]
 \dd\Gamma(\vy),
\cr&
\bar{\mathcal J}[S](\vx) = 
 \tfrac1{4\pi}\int_{\Omega} \left[\frac{\e^{-\int_{[\vx,\vy]}\kappa}}{|\vy-\vx|^2}
 +R^0 \frac{\e^{-\int_{[\vx,\vx'_1]\cup[\vx'_1,\vy]}\kappa}}{(|\vx-\vx'_1|+|\vx'_1-\vy|)^2}
   \right]S(\vy)\dd\vy.
 \end{aligned}
\end{equation*}

\begin{figure}[htbp]
\begin{center}
\begin{tikzpicture}[scale=0.5]
\draw  (-5,0) -- (5,0)--(4,2.5)--(-4,2.5)--(-5,0); 
\draw  (2,4.5) -- (5,4.5)--(4,6)--(1.5,6)--(2,4.5); 
\draw (-4,4) node [left] {$\vx$};
\draw[dashed]  (-4,4) -- (-1,1); 
\draw[dotted,blue]  (-4,-2) -- (1.5,6); 
\draw[dotted,blue]  (-4,-2) -- (2,4.5); 
\draw[dotted,blue]  (-4,-2) -- (5,4.5); 
\draw [blue] (-1.7,0.5) -- (-0.6,0.5)--(-1,1.2)--(-1.9,1.2)--(-1.7,0.5); 
\draw[red] (-4,-2) node [left] {$\bar\vx$};
\draw[dashed,red]  (-4,-2) -- (-1,1); 
\draw[thick,-stealth]  (-1,1) -- (-2,2);
\draw (-2,2) node [left] {$\vom$};
\draw[dotted, thick] (-1,1)--(-1,0);
\draw[thick,-stealth] (-1,0)--(-1,-1);
\draw (-1,1) node [left] {$\vx_\Sigma:=\vx_\Sigma(\vx,\vom)$};
\draw[thick,stealth-]  (-1,1) -- (0,2);
\draw (0,2) node [right] {$\vom'=\vom-2(\vn(\vx_\Sigma)\cdot\vom)\vn(\vx_\Sigma)$};
\draw (-1,-1) node [left] {$\vn(\vx_\Sigma)$};
\draw[dashed]  (-1,1) -- (3,5); 
\draw (3,5) node [right] {$\vx'_{\Sigma}:=\vx_\Sigma(\vx_\Sigma,\vom')$};
\draw[thick,-stealth]  (3,5) -- (2.7,7);
\draw (2.7,7) node [right] {$\vn(\vx'_{\Sigma})$};
\end{tikzpicture}
\caption{In this configuration the source $\Gamma_Q$ is the upper square. An RC is imposed on the lower plane $\Gamma_R$. $S^E_{\nu}$ has an integral of the solid angle of the upper square seen from $\vx$ plus an integral of the solid angle of the upper square seen from $\bar\vx$, the symmetric of $\vx$ with respect to $\Gamma_R$.}
\label{sketch}
\end{center}
\end{figure}
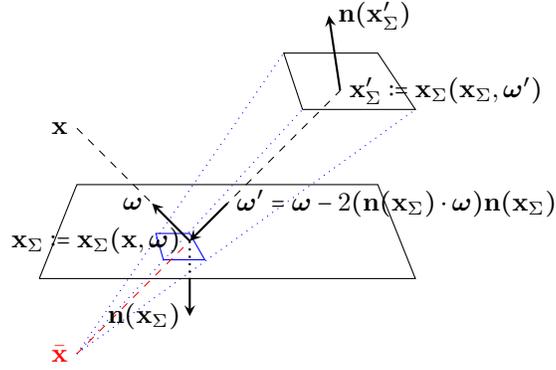

\subsection{Fixed Point Iterations}
Consider the fixed point iterations initialized with $T^0$ and $J^0=0$. %

\bigskip

\textbf{Algorithm}
For $k=0,1,\dots$:
\begin{equation}\label{algo}
\begin{aligned}&
\text{Set } S^k_\nu(\vx)=\kappa(1-a)B_\nu(T^k)+\kappa a J^k_\nu.
\\ &
\text{Set }  J^{k+1}_\nu(\vx)
=\bar S^E_\nu(\vx) + \bar{\mathcal  J}[S^k_\nu](\vx).
\\ &
\text{Compute $T^{k+1}$ by solving (using Newton algorithm) for each $\vx\in\Omega$ } 
\\ & \hskip 1cm 
\int_0^\infty{\kappa_\nu}(1-a_\nu)(J^{k+1}_\nu-B_\nu(T^{k+1}))\dd\nu=0.
 \end{aligned}
\end{equation}
\begin{proposition}
Let $\{J^*_\nu,T^*\}$ be the solution.   If $T^0(\vx)> T^*(\vx),~\forall\vx\in\Omega$ then the iterations are monotone decreasing: $T^{k}(\vx)> T^{k+1} >T^*(\vx),~\forall\vx\in\Omega$.  Conversely if  $T^0(\vx)< T^*(\vx),~\forall\vx\in\Omega$
then the iterations are monotone increasing: $T^{k}(\vx)< T^{k+1} <T^*(\vx),~\forall\vx\in\Omega$.
\end{proposition}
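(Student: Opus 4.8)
The plan is to recast the coupled $(T^k,J^k)$ recursion as a single monotone iteration in $J$ alone. The third line of the Algorithm at step $k-1$ solves the BBE for $T^k$ from $J^k$, i.e. $T^k={\cal T}[J^k]$ for every $k\ge 1$; substituting this into the source $S^k_\nu$ collapses the scheme to $J^{k+1}=\Phi[J^k]$ with $\Phi[J]:=\bar S^E_\nu+\bar{\mathcal J}[\kappa(1-a)B_\nu({\cal T}[J])+\kappa a J]$, whose fixed point is exactly $J^*$. Since ${\cal T}$ is order preserving (shown below), the monotonicity of the temperatures $T^k={\cal T}[J^k]$ follows at once from that of the $J^k$, so the whole proposition reduces to proving that the $J^k$ form a monotone sequence bracketed by $J^*$.

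Next I would assemble three elementary monotonicity facts. First, $T\mapsto B_\nu(T)$ is increasing, since $\partial_T B_\nu>0$. Second, the map ${\cal T}:J\mapsto T$ defined by $\int_0^\infty\kappa(1-a)(J_\nu-B_\nu(T))\dd\nu=0$ is increasing: subtracting the defining identities for two data $J\le\tilde J$ gives $\int_0^\infty\kappa(1-a)(B_\nu(\tilde T)-B_\nu(T))\dd\nu=\int_0^\infty\kappa(1-a)(\tilde J_\nu-J_\nu)\dd\nu\ge0$ at each $\vx$, and as $B_\nu(\cdot)$ is increasing for every $\nu$ this forces $\tilde T\ge T$. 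Third, $\bar{\mathcal J}$ is order preserving because its kernel is nonnegative; this is the only place where the reflective extension matters, and the new summand $\frac{\e^{-\int_{[\vx,\vx'_n]\cup[\vx'_n,\vy]}\kappa}}{(|\vx-\vx'_n|+|\vx'_n-\vy|)^2}R_\nu$ is $\ge0$ precisely because $R_\nu\ge0$ and the attenuation factors are positive, so the positivity established in \cite{DIA} for the non-reflective kernel carries over verbatim. Combining the three shows $\Phi$ is increasing, and $\bar S^E_\nu\ge0$.

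With $\Phi$ monotone I would close the argument by the standard super/sub-solution induction, tracking the two differences $J^k-J^*$ and $J^{k+1}-J^k$. For $k\ge1$ one has $J^{k+1}-J^*=\bar{\mathcal J}\!\left[\kappa(1-a)(B_\nu(T^k)-B_\nu(T^*))+\kappa a(J^k-J^*)\right]$ with $T^k={\cal T}[J^k]$; if $J^k\ge J^*$ then $T^k\ge T^*$, both bracketed terms are $\ge0$, and positivity of $\bar{\mathcal J}$ gives $J^{k+1}\ge J^*$, so the ordering against the fixed point propagates. The same computation with $J^{k-1}$ in place of $J^*$ shows $J^k\le J^{k-1}\Rightarrow J^{k+1}\le J^k$, i.e. once the sequence starts decreasing it keeps decreasing; applying the increasing map ${\cal T}$ then yields $T^*<T^{k+1}<T^k$. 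The increasing case $T^0<T^*$ is identical with all inequalities reversed.

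The hard part, and where I would concentrate the effort, is the very first step, because the initialization $J^0=0$ is not consistent with $T^0$ through the radiation operator: it sits below $J^*$ while $T^0$ sits above $T^*$. One must still establish the two base cases $J^1\ge J^*$ and $J^2\le J^1$, but here $J^1-J^*=\bar{\mathcal J}[\kappa(1-a)(B_\nu(T^0)-B_\nu(T^*))-\kappa a J^*]$ and $J^2-J^1=\bar{\mathcal J}[\kappa(1-a)(B_\nu(T^1)-B_\nu(T^0))+\kappa a J^1]$ each mix a term of the expected sign with one of the opposite sign, the conflict being between the over-emission driven by $T^0>T^*$ and the scattering contribution of $J^0=0$ versus $J^*$. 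I expect to resolve this either by showing that $T^0>T^*$ together with the BBE forces the over-emission to dominate, so that $J^1$ is a genuine super-solution with $\Phi[J^1]\le J^1$ and $J^1\ge J^*$, or, more robustly, by a direct comparison on the coupled pair $(T^k-T^*,J^k-J^*)$ in which the positive operators $\bar{\mathcal J}$ and ${\cal T}$ are used to propagate a single sign through the first step. Once the base step is secured, the inductive machinery above finishes the proof.
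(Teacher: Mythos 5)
Your inductive core is the same as the paper's: both arguments rest on linearity and strict positivity of $\bar{\mathcal J}$ (your observation that the reflective summand preserves positivity because $R_\nu\ge 0$ is exactly why the old argument from \cite{DIA} carries over), plus the monotonicity of $T\mapsto B_\nu(T)$ and of the BBE map ${\cal T}$, propagated by induction through $S^k$ ordered $\Rightarrow J^{k+1}$ ordered $\Rightarrow T^{k+1}$ ordered, both against the previous iterate and against the fixed point $\{J^*_\nu,T^*\}$. On these points your write-up is actually more explicit than the paper's.

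However, as submitted your proof is incomplete precisely at the point you flag as "the hard part": the base case is never proved, only two candidate strategies are announced. Concretely, with the algorithm's initialization $J^0=0$, in the decreasing case one has $S^0-S^*=\kappa(1-a)\bigl(B_\nu(T^0)-B_\nu(T^*)\bigr)-\kappa a J^*$, which has no definite sign when $a>0$, and your first suggested fix (that $T^0>T^*$ forces the over-emission to dominate) is false without quantitative assumptions: take $T^0$ barely above $T^*$ and $a$ close to $1$ and the missing scattering term wins. You should know that the paper does not close this gap either; its proof treats only the increasing case and secures the base step by the special \emph{consistent} choice $T^0=0$, $J^0=0$ (then $S^0=0\le S^1$ and $S^0<S^*$, so the induction starts), while the decreasing half is asserted by symmetry. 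That symmetry argument is legitimate only if the initialization is consistent from above (e.g.\ $J^0$ a supersolution with $T^0={\cal T}[J^0]$), or trivially when $a=0$ as in the Kobayashi test where $S^k$ depends on $T^k$ alone. So your diagnosis of the difficulty is correct and sharper than the paper's treatment, but to finish you should either restate the proposition with a consistent initialization $(T^0,J^0)$ ordered on the same side of $(T^*,J^*)$, or carry out your second route, a coupled comparison on the pair $(T^k-T^*,\,J^k-J^*)$; as it stands, the decreasing half of the proposition is not proved.
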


\begin{proof}:
Le us prove it for the monotone increasing sequence.

By subtracting the definition $J^k_\nu$ from that of $J^{k+1}_\nu$ and using the linearity of $\bar{\cal J}$, we obtain
\[
J^{k+1}_\nu(\vx)-J^{k}_\nu(\vx)=  \bar{\mathcal  J}[S^k_\nu-S^{k-1}_\nu](\vx).
\]
As ${\cal J}$ is a strictly positive operator, if $S^k_\nu>S^{k-1}_\nu$ for all $\vx$ then 
$J^{k+1}_\nu(\vx)>J^{k}_\nu(\vx)$.
The equation for $T^{k+1}$ is also monotone in the sense that 
\[
J^{k}_\nu(\vx)>J^{k}_\nu(\vx)\quad \implies \quad B_\nu(T^{k+1}) > B_\nu(T^{k})
\quad \implies \quad  T^{k+1}>T^k,
\]
because $B_\nu$ is increasing in $T$.

Conclusion: if $T^1>T^0$ and $S^1>S^0$ then $T^{k+1}>T^k$ for all $k$.  One sure way to impose it is to choose $T^0=0$ and $J^0=0$.

To prove that $T^k<T^*$ we observe that
\begin{equation*}
\begin{aligned}&
J^{k}_\nu(\vx)-J^{*}_\nu(\vx)=  \bar{\mathcal  J}[S^{k-1}_\nu-S^{*}_\nu](\vx).
\\&
\text{Hence } S^{k-1}_\nu<S^*_\nu  \quad \implies \quad 
J^{k}_\nu(\vx)<J^{*}_\nu(\vx) \quad \implies \quad T^k<T^*.
 \end{aligned}
\end{equation*}
\end{proof}
Discretization seems to preserve this property (see Figure \ref{converge}).
\begin{remark}
Henceforth, convergence and uniqueness can probably be proved as in \cite{FGOP3}, but there are technical difficulties of functional analysis which may not be appropriately discussed here.
\end{remark}

\section{FEM discretization and Compressed H-Matrices}
For clarity consider example \ref{exemple}.  As the values of $Q^0$ and $R^0$ take different values on $\Gamma_Q$ and $\Gamma_R$, we write $Q^0(\vx)$ and $R^0(\vx)$.
  
The domain $\Omega$ is discretized by a tetraedral mesh; the boundary $\Gamma$ is discretized by a triangular mesh, not necessarily conforming with the volume mesh.

Let  $\{\vx^j\}_1^N$ be the vertices of the tetraedra of $\Omega$ and $\{\tilde\vx^l\}_1^L$ the vertices of the triangles of $\Gamma$ . 

A continuous $P^1$ interpolation  of $J$ on the tetraedral mesh is:
\[
J(\vx)=\sum_1^N J_j {\hat w}^j(\vx)~\text{ where ${\hat w}^j$ is the $P^1$- Finite Element hat function of vertex $\vx^j$}.
\]
 Then 
\begin{equation*}
\begin{aligned}&
S_{\nu,j}:= a J_{\nu,j}+(1-a)B_\nu(T_j), \quad 
J_{\nu,i}:=\bar S^E_{\nu,i}+\sum_j G_{\kappa}^{ij}S_{\nu,j}\quad \text{ where } 
\\&
 G_{\kappa}^{ij}=\tfrac1{4\pi}\int_\Omega\left[ \kappa\frac{\e^{-\int_{[\vx^i-\vy]}\kappa}}{|\vx^i-\vy|^2}\dd y 
+ \sum_{n=1}^M R^0(\vx'_n) \frac{\e^{-\int_{[\vx^i,\vx'_n]\cup[\vx'_n,\vy]}\kappa}}{(|\vx^i-\vx'_n|+|\vx'_n-\vy|)^2}\right]{\hat w}^j(\vy)\dd\vy
 \\ & \text{ and where }
 \bar S^E_{\nu,i} =
  \tfrac{1}{4\pi}\int_\Gamma Q^0(\vy)\left[
 \left(\frac{[(\vx^i-\vy)\cdot\vn(\vy)]_-}{|\vx^i-\vy|^2}\right)^2\e^{-\int_{[\vx^i,\vy]}\kappa}\right.
 \\ & \left.  \hskip 3cm
+ \sum_{n=1}^M R^0(\vx'_n)
 \frac{([(\vx'_n-\vy)\cdot\vn(\vy)]_-)^2 \e^{-\int_{[\vx^i,\vx'_n]\cup[\vx'_n,\vy]}\kappa}}{|\vx'_n-\vy|^2(|\vx^i-\vx'_n|+|\vx'_n-\vy|)^2}
\right] \dd\Gamma(\vy).
\end{aligned}
\end{equation*}
The integrals are approximated with quadrature at points $\{\vx^j_q\}_1^{M_q}$.  The points are inside the elements; consequently $|\vx^i-\vx_q^j|$ is never zero. A formula of degree 5, with $M_q=14$, is used when $|\vx^i-\vy|$ is small and of degree 2, with $M_q=4$, otherwise; the results do not change when higher degrees are used. Fortunately when $\vx^i$ is close to $\Gamma$ an analytical formula can be used \cite{FGOP3}.

To compute $\vx'_n$ such that $[\vy,\vx'_n]$ is the reflected ray of $[\vx'_n,\vx^i]$ a loop on all the elements of the reflecting boundaries is necessary. This can be expensive, but in the case of planar reflective boundaries the symmetric point $\bar\vx^i$ is easy to compute and so is the intersection of $[\bar\vx^i,\vy]$ with the reflective boundary.

Finally, to the vector $\{\bar S^E_{\nu,i}\}_{i=1}^N$ we associate a matrix $\{\bar S^E_{i,l}\}_{i,l=1}^{N,L}$ by replacing $Q^0(\vy)$ above by $\tilde w^l(\vy)$. Then:
\[
Q^0(\vy)=\sum_1^L Q^0_l \tilde w^l(\vy)
\implies
\bar S^E_{\nu,i} = \sum_1^L \bar S^E_{i,l} Q^0_l.
\]

\subsection{Compression}

For each $\nu$ we have two large dense matrices, $\{\bar G_{i,j}\}_{i,j=1}^{N,N}$ and $\{\bar S^E_{i,l}\}_{i,l=1}^{N,L}$.
\begin{remark}
Note that for each value of $\nu$ two  matrices are needed. However on close inspection it is really two matrices for each value of $\kappa_\nu$.  Very often, less than ten values are sufficient to represent a general $\kappa_\nu$ by a piece-wise constant interpolation on these values.
\end{remark}

These matrices can be compressed as ${\mathcal H}$-matrices \cite{beben},\cite{rjasanow},\cite{sauter} (and the references therein)  so that the matrix-vector product has complexity $O(N\ln N)$.
 
 The method works best when the kernel  in the integrals decays with the distance between $\vx^i$ and $\vy$. In all matrices the kernel decays with the square of the distance.
The ${\mathcal H}$-matrix  approximation views ${\bf G}$ as a hierarchical tree of square blocks.
The blocks correspond to interactions between clusters of points near $\vx^j$ and near $\vx^i$. A far-field interaction block can be approximated by a low-rank matrix because its singular value decomposition (SVD) has fast decaying singular values.
 We use the {\it Partially Pivoted Adaptive Cross-Approximation} (ACA) \cite{borm} to approximate the first terms of the SVD of the blocks, because only $r$ rows and $r$ columns are needed instead of the whole block, where $r$ is the rank of the approximation.  The rank is a function of a user defined parameter $\epsilon$ connected to the relative  Frobenius norm error. Another criterion must be met: if $R_1$ (resp. $R_2$) is the radius of a cluster of points centered at $\vx_1$ (resp. $\vx_2$), then one goes down the hierarchical tree  until the corresponding block  satisfies $ \max(R_1,R_2)<\eta |\vx_1-\vx_2|$ where $\eta$ is a user defined parameter. If a leaf is reached, the block is not compressed and all the elements are computed.

The precision is not guaranteed  if $[(\vx-\vy)\cdot\vn(\vy)]_-$  jumps from one triangular face to another is large. 
A similar singularity caused by normals is analyzed for a double layer potential formulation in \cite{beben} (Example 3.38, p.148) and a remedy is proposed.
To check whether this remedy is needed here we ran two  cases, one without compression and one with 97\% compression. No difference was observed.

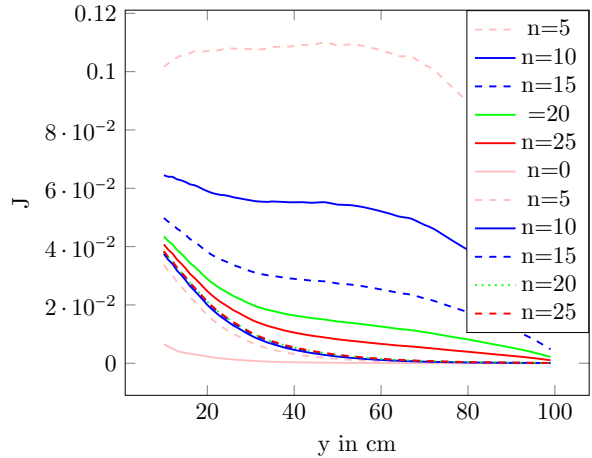
\begin{figure}[H]
\begin{center}
\begin{tikzpicture}[scale=0.9]
\begin{axis}[legend style={at={(1,1)},anchor=north east}, compat=1.3,
   xlabel= {y in cm},
  ylabel= {J}
  ]
%
%
\addplot[thick,dashed,color=pink,mark=none, mark size=1pt] table [x index=0, y index=1]{fig/koba4T5R.txt};
\addlegendentry{n=5}
\addplot[thick,solid,color=blue,mark=none, mark size=1pt] table [x index=0, y index=1]{fig/koba4T10R.txt};
\addlegendentry{n=10}
\addplot[thick,dashed,color=blue,mark=none, mark size=1pt] table [x index=0, y index=1]{fig/koba4T15R.txt};
\addlegendentry{n=15}
\addplot[thick,solid,color=green,mark=none, mark size=1pt] table [x index=0, y index=1]{fig/koba4T20R.txt};
\addlegendentry{=20}
\addplot[thick,solid,color=red,mark=none, mark size=1pt] table [x index=0, y index=1]{fig/koba4T25R.txt};
\addlegendentry{n=25}
\addplot[thick,solid,color=pink,mark=none, mark size=1pt] table [x index=0, y index=1]{fig/koba40R.txt};
\addlegendentry{n=0}
\addplot[thick,dashed,color=pink,mark=none, mark size=1pt] table [x index=0, y index=1]{fig/koba45R.txt};
\addlegendentry{n=5}
\addplot[thick,solid,color=blue,mark=none, mark size=1pt] table [x index=0, y index=1]{fig/koba410R.txt};
\addlegendentry{n=10}
\addplot[thick,dashed,color=blue,mark=none, mark size=1pt] table [x index=0, y index=1]{fig/koba415R.txt};
\addlegendentry{n=15}
\addplot[thick,dotted,color=green,mark=none, mark size=1pt] table [x index=0, y index=1]{fig/koba420R.txt};
\addlegendentry{n=20}
\addplot[thick,dashed,color=red,mark=none, mark size=1pt] table [x index=0, y index=1]{fig/koba425R.txt};
\addlegendentry{n=25}
\end{axis}
\end{tikzpicture}
\caption{\label{converge}  Values of $J$, for the academic test, along the $y$ axis at $x=z=15$ computed with a RC.  Convergence versus iteration number n. When the  scaled temperature is initialized to $T^0=0.001$ at $n=0$ the convergence is monotonously increasing. When $T^0=0.44$ the convergence is monotonously decreasing. }
\end{center}
\end{figure}
\section{{An Academic Test}}
In \cite{koba} a semi-analytic solution of the RTE is given for a geometry shown on Figure \ref{kobayashifig}. In this test $a=0$ and $\kappa$ is a function of $\vx$ but not of $\nu$ . Hence the grey formulation can be used for $\bar I=\int_0^\infty I_\nu\dd\nu$.
By averaging \eqref{algo} in $\nu$ and due to the Stefan-Boltzmann relation, the following holds: 
\begin{equation}
\begin{aligned}&
 \label{grey}
\int_0^\infty B_\nu(T)\dd\nu=\sigma T^4 \hbox{ with }\sigma=\frac{\pi^4}{15}\quad \implies
\\ &
 \bar J^{k+1}(\vx)
=\bar S^E(\vx) + \bar{\mathcal  J}[\kappa\sigma (T^k)^4](\vx),
\quad
\kappa\sigma (T^{k+1})^4 = \kappa\bar J^{k+1}.
\end{aligned}
\end{equation}

\subsection{The Geometry}
The outer container is $D=(0,60)\times(0,100)\times(0,60)$, in cm.  A  cube  $C=[0,10]^3$, inside $D$,  radiates with intensity $Q^0=0.1$. A rectangular cylinder prolonging the radiating cube $(0,10)\times(10,100)\times(0,10)$ has a low absorption $\kappa=10^{-4}$ while the rest has $\kappa=0.1$.
In Kobayashi's test case 1A there is no scattering, and the three planes containing the origin reflects the radiations perfectly ($R_\nu=1$): $(O,x,z)$, $(O,x,y)$, $(O,y,z)$.  

Unfortunately the present method cannot handle volumic radiating region.  Consequently we have kept the geometry but only the 3 faces of $C$ inside $D$ radiate in all directions $\vom$ with intensity $Q^0[\vom\cdot\vn]_-$, where $\vn$ is the normal to the cube's face pointing inside the cube. The domain is $\Omega=D\backslash C$ (see Figure \ref{kobayashifig}).
We refer to this case as Test-3.

  \begin{wrapfigure}{r}{0.4\textwidth}
  \centering
\includegraphics[width=0.4\textwidth]{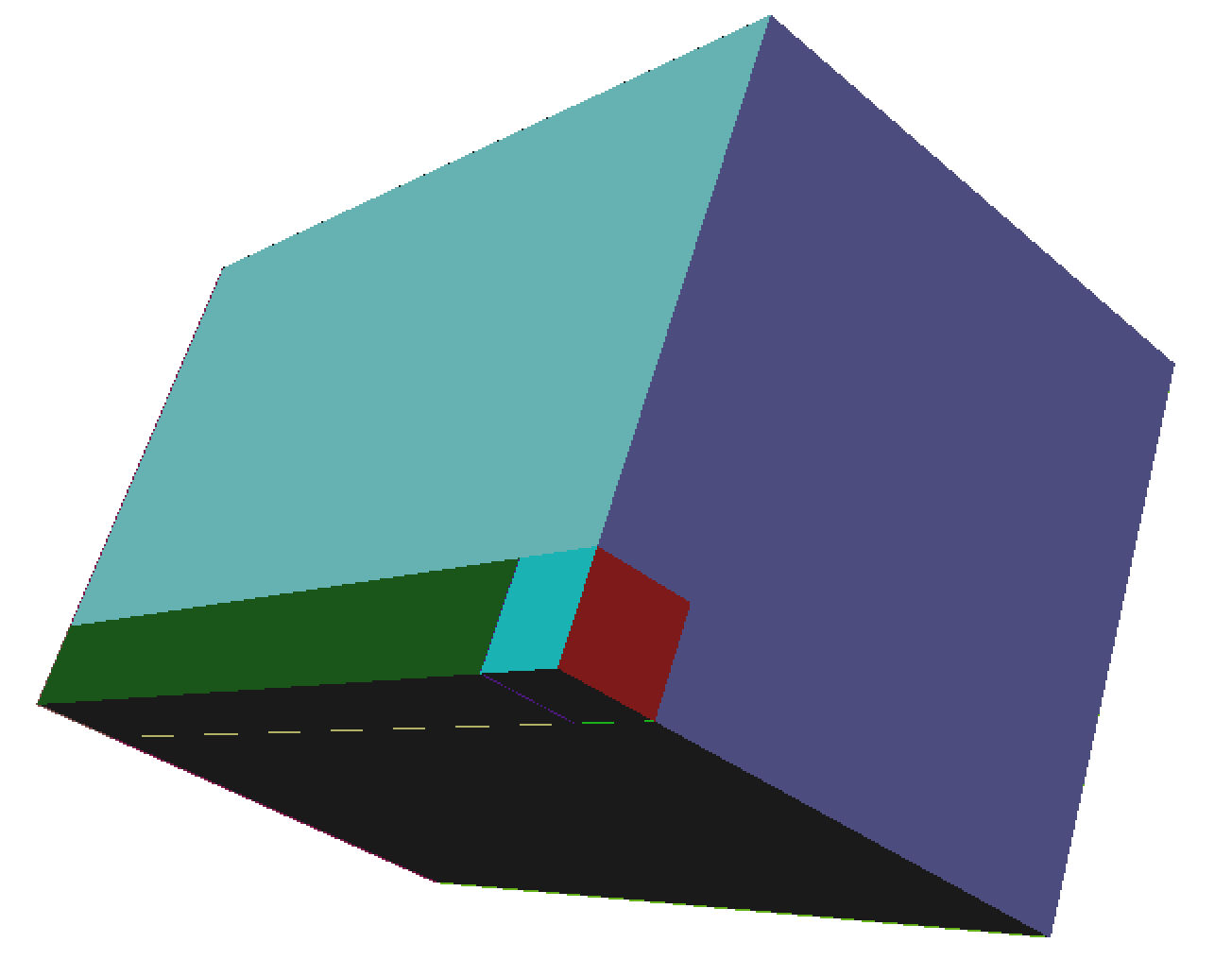}
    \caption{ A small cube (colored blue and red on the figure) radiates normally to its faces in a medium which has a very small absorption coefficient $\kappa=10^{-4}$ in the  cylinder prolonging the cube and $\kappa=0.1$ elsewhere. }
    \label{kobayashifig}
\end{wrapfigure}

\subsection{Results}

To assert the precision of the method we consider first only one reflective plane, $\Gamma_R=(0,y,z)$ and a constant $\kappa=0.1$. We refer to this case as Test-1.   Test-2 is Test-1 with  $\kappa$ is as in Test-3. 

First we verify, on Test-3, that the convergence is monotone increasing if $T^0$ is small and monotone decreasing if $T^0$ is large (Figure \ref{converge}). Note that the monotone increasing sequence converges faster.

Next, we compare the results, on Test-1, with a computation on a domain $\bar D=(-60,60)\times(0,100)\times(0,60)$ which is $D$ plus the symmetric of $D$ with respect to the plane $(0,y,z)$. This is because reflection on a plane is equivalent to extending the domain  by symmetry with respect to that plane.

Figure \ref{double} shows, on Test-1, level surfaces of $J$ computed on the symmetrized domain (but restricted to the original domain) and compared with the same level surfaces but computed with the RC. Surfaces with similar colors should be near each other.  In fact the difference is not visible except near $z=0$ .

 Figure \ref{doublenoR} shows, on Test-2, the level surfaces of $J$ computed with the RC,  and the same level surfaces but computed without any RC on the $(O,y,z)$ plane.  It is seen that surfaces with similar color are far from each other.
By comparing Figure \ref{double} with Figure \ref{doublenoR} we see that the RC does almost the same as symmetry and that no RC at all is a non viable approximation for this problem.  
\vskip-0.3cm
\begin{figure}[H]
  \begin{center}
\includegraphics[width=10cm, clip, trim = {5cm 5cm 0cm 5cm}]{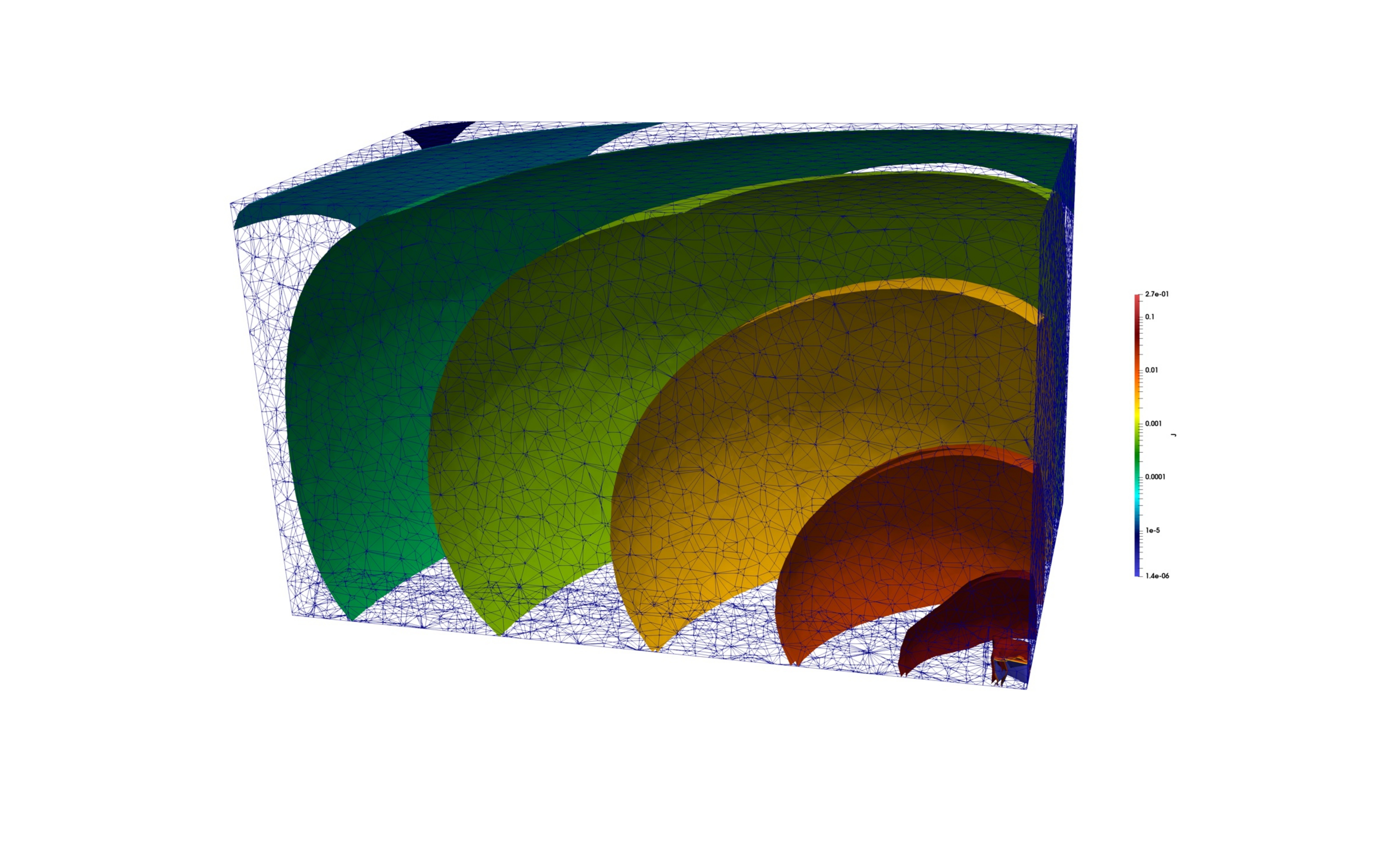}
\caption{Level surfaces of $J$ using a log scale computed with $\kappa=0.1$ and only one reflective plane, ($0,y,z)$ facing us, slightly to the left. Comparison between a computation done with the RC and a computation done on a symmetrized domain, double in size.Surfaces of equal colors are so near each other that it is hard to distinguish them.}
  \label{double}
  \end{center}
\end{figure}
\vskip-1cm
 \begin{figure}[H]
  \begin{center}
\includegraphics[width=9cm, clip, trim = {5cm 5cm 0cm 5cm}]{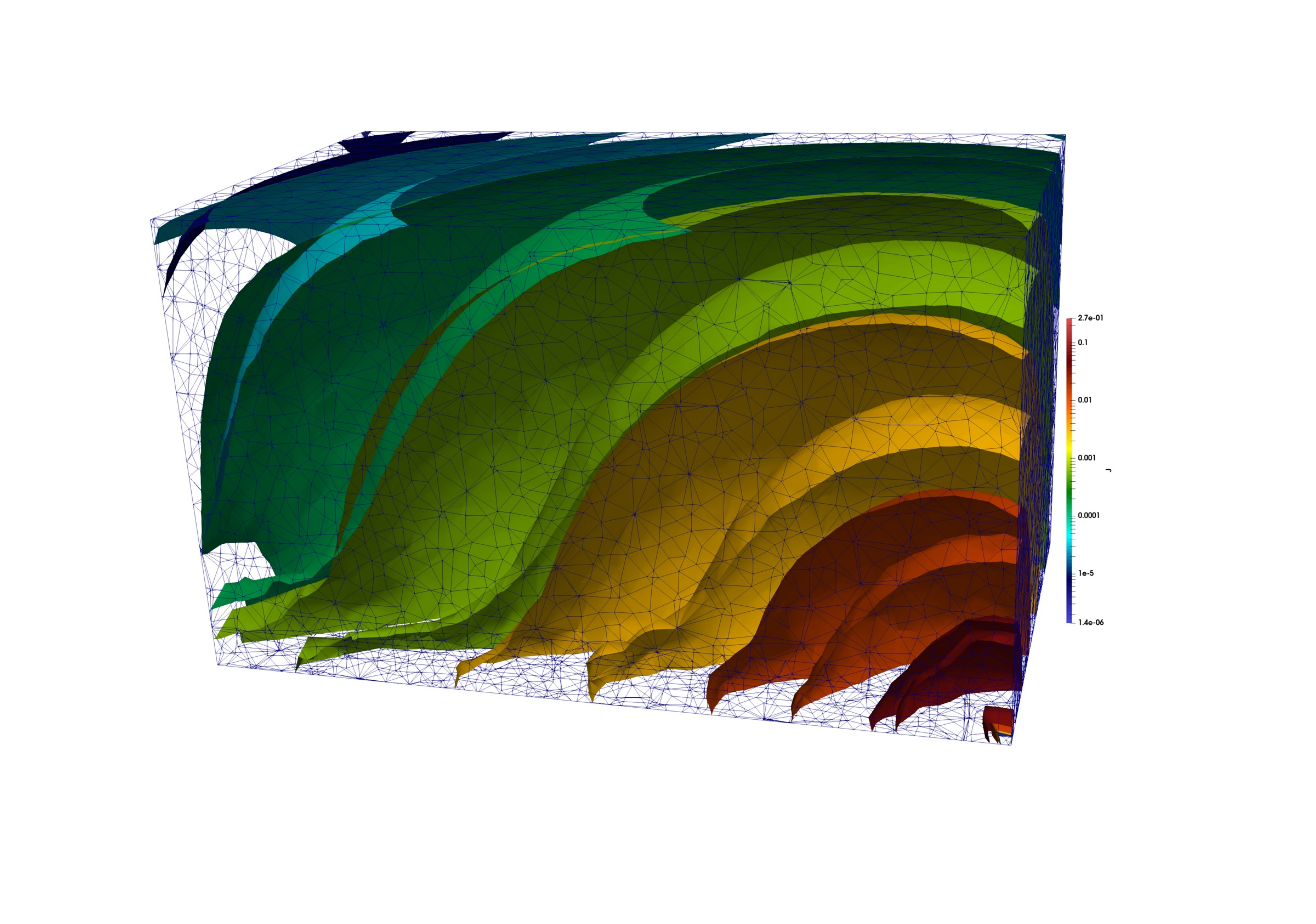}
    \caption{Same as in Figure \ref{double} but the RC is not used in one computation. Surfaces of equal colors are far from those using the RC, indicating the absolute necessity of a RC. Here $\kappa$ is as in Test-3.}
  \label{doublenoR}
  \end{center}
\end{figure}
Similarly, Figure \ref{compare1Da} shows $x\mapsto J(x,25,25)$, computed on the symmetrized domain, or with the radiative condition or without it.

Finally, Figure \ref{compare1Dc} shows $x\mapsto J(x,25,25)$, computed with the RC on 4 meshes. The same first 4  meshes are used in Table \ref{CPU} where the theoretical complexity $N\sqrt[3]{N}\ln N$ is approximately observed.  The compressing ratio for the surface and the volume matrices are shown too.

\begin{figure}[H]
\begin{minipage} [b]{0.5\textwidth} 
\begin{center}
\begin{tikzpicture}[scale=0.75]
\begin{axis}[legend style={at={(1,1)},anchor=north east}, compat=1.3,
   xlabel= {x in cm},
  ylabel= {J}
  ]
\addplot[thick,dashed,color=blue,mark=none, mark size=1pt] table [x index=0, y index=1]{fig/koba5R.txt};
\addlegendentry{With RC}
\addplot[thick,dotted,color=red,mark=none, mark size=1pt] table [x index=0, y index=1]{fig/koba5S.txt};
\addlegendentry{Symmetrized domain}
\addplot[thick,solid,color=magenta,mark=none, mark size=1pt] table [x index=0, y index=1]{fig/koba5N.txt};
\addlegendentry{No RC}
\end{axis}
\end{tikzpicture}
\caption{\label{compare1Da}  Values of $J$ along the $x$ axis at $y=z=25$ computed by different methods on the coarse mesh for Test-1.}
\end{center}
\end{minipage}
\hskip0.5cm
\begin{minipage} [b]{0.5\textwidth} 
\begin{center}
\begin{tikzpicture}[scale=0.75]
\begin{axis}[legend style={at={(1,1)},anchor=north east}, compat=1.3,
   xlabel= {x in cm},
  ylabel= {J}
  ]
\addplot[thick,dashed,color=black,mark=none, mark size=1pt] table [x index=0, y index=1]{fig/koba2R.txt};
\addlegendentry{N=84042}
\addplot[thick,solid,color=magenta,mark=none, mark size=1pt] table [x index=0, y index=1]{fig/koba3R.txt};
\addlegendentry{N=26189}
\addplot[thick,solid,color=blue,mark=none, mark size=1pt] table [x index=0, y index=1]{fig/koba5Ra.txt};
\addlegendentry{N=8003}

\addplot[thick,solid,color=green,mark=none, mark size=1pt] table [x index=0, y index=1]{fig/koba7R.txt};
\addlegendentry{N=2758}
\end{axis}
\end{tikzpicture}
\caption{\label{compare1Dc}  Values of $J$ along the $x$ axis at $y=z=25$ computed on Test-3 with different meshes.}
\end{center}
\end{minipage}
\end{figure}

\begin{table}[htp]
\caption{CPU time on a MacBookPro M1, and compression level (C.L.)for Test-3.}
\begin{center}
\begin{tabular}{|c|c|c|c|c|}
N vertices & Surface C.L. & Volume C.L. & CPU &$\frac{10^5 CPU}{N\sqrt[3]{N}\log N}$\cr
\hline
2758 & 0.43 & 0.60 & 5.3"  & 1.73\cr
8003 & 0.56 & 0.77 & 16.5"  & 1.14\cr
26189 & 0.67 & 0.89 & 79.4"  & 1.00 \cr
84042 & 0.75 & 0.95 & 389" & 0.93 \cr
195974 & 0.82 & 0.97 & 1563" & 1.13\cr
\end{tabular}
\end{center}
\label{CPU}
\end{table}%

\medskip
Test 1A of \cite{koba}, denoted here Test-3, has been computed, i.e. non constant $\kappa$ and 3 reflective planes. The surface levels of $J$ are shown on Figure \ref{doublenoR2}. Convergence versus mesh size  on the line $(x,25,25)$ is shown on Figure \ref{compare1Dc}.

 The comparison with the data in \cite{koba} on the line $(5,y,5)$ is shown on Figure \ref{converge2}. But since the radiative sources are different (volumic in Kobayashi's and surfacic in our case) we have scaled the result with Kobayashi's value at $x=5,y=15,z=5$.

Finally the $L^2$ error is computed by using the finest mesh, $N=195974$ as a reference solution.  The results are displayed on Figure \ref{errorkoba}.  It shows the $L^2$-error versus $h:=\sqrt[-3]{N}$, in log-log scales.

\begin{figure}[H]
\begin{center}
\begin{minipage} [b]{0.45\textwidth}
\begin{tikzpicture}[scale=0.7]
\begin{axis}[legend style={at={(1,1)},anchor=north east}, compat=1.3,
   xlabel= {y in cm},
  ylabel= {J}
  ]
\addplot[thick,dotted,color=blue,mark=*, mark size=1pt] table [x index=0, y index=1]{fig/kobayashi2.txt};
\addlegendentry{Kobayashi}
\addplot[thick,solid,color=red,mark=none, mark size=1pt] table [x index=0, y index=3]{fig/kobayashi2.txt};
\addlegendentry{Present method}
\end{axis}
\end{tikzpicture}
\caption{\label{converge2}  Values of $J$ versus $y\ge 15$ at $x=z=5$ and comparison with the values given in \cite{koba}. A scaling is applied so that the radiation intensities coincide at  $y=15$ (because \cite{koba} is given for volumic source and the present method handles only surface sources). }
\end{minipage}
\hskip 1cm
\begin{minipage} [b]{0.45\textwidth} 
\begin{tikzpicture}[scale=0.7]
\begin{axis}[legend style={at={(1,1)},anchor=north east}, compat=1.3,
   xmax=1.7, xmin=1.1,
   xlabel= {$x=\log\sqrt[3]{N}$},
  ylabel= {$y=\log$-error}
  ]
\addplot[thick,dotted,color=blue,mark=*, mark size=1pt] table [x index=0, y index=1]{fig/errorkoba3.txt};
\addlegendentry{Kobayashi $L^2$-error}
\addplot[thick,dotted,color=red,mark=none, mark size=1pt]{-x+1.8};
\addlegendentry{$y=-x+1.8$}
\end{axis}
\end{tikzpicture}
\caption{\label{errorkoba}  Log-log plot of $L^2$ error versus the average mesh size $h=\sqrt[3]{N}$ for Test-3. The line $-x+1.8$ indicates an error $O(h)$. The reference solution is computed on a mesh with $N=195974$. The plotted points are computed on meshes with $N$ as in Table \ref{CPU}.}
\end{minipage}
\end{center}
\end{figure}

 \begin{figure}[H]
  \begin{center}
\includegraphics[width=14cm, clip, trim = {5cm 5cm 0cm 5cm}]{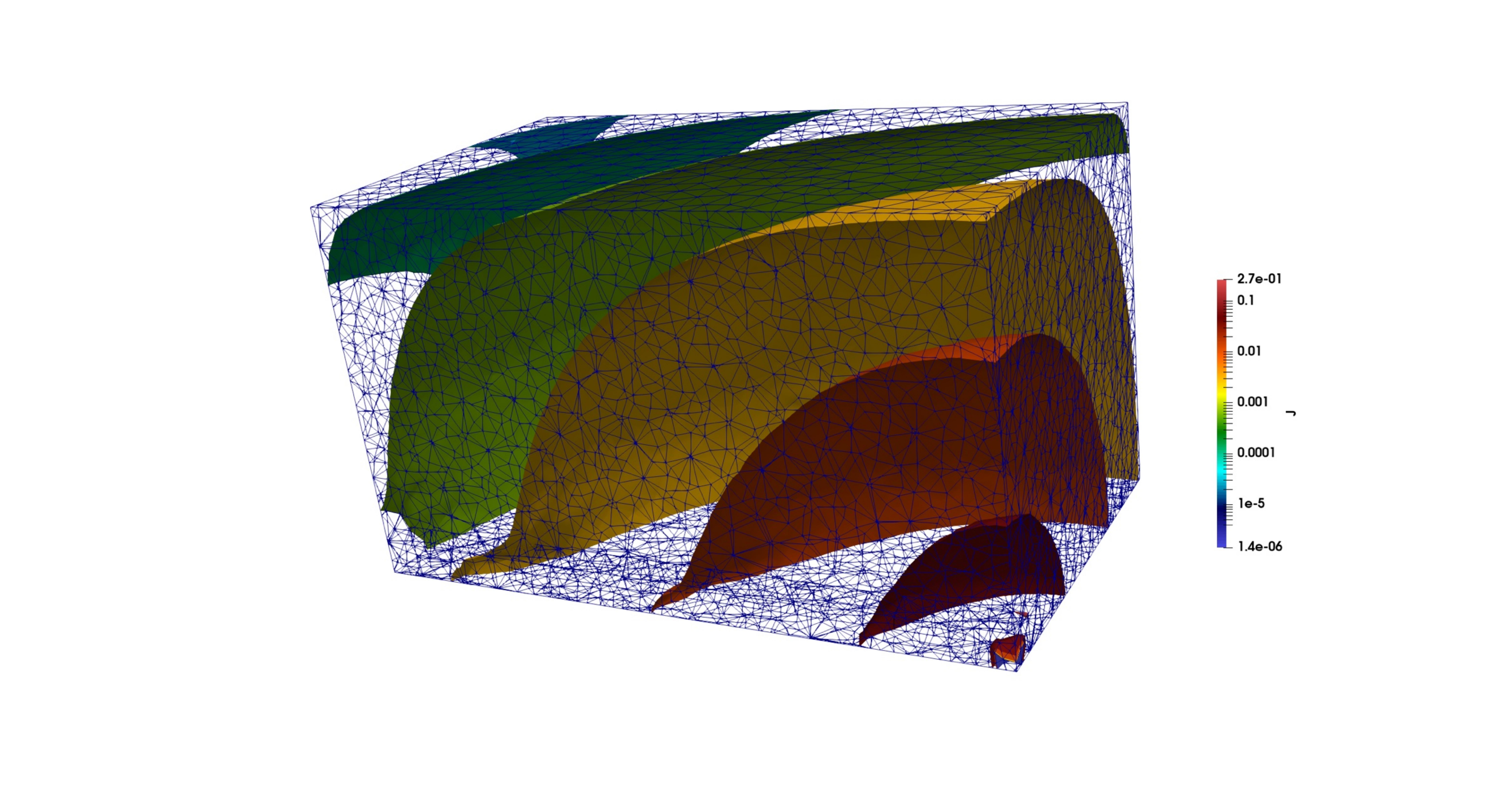}
\vskip-1cm
    \caption{Kobayashi's test: Level surfaces of $J$ using a log scale. The reflective planes are the $(O,x,z),(O,y,z),(O,x,y)$. The origin $O$ is the lower right corner.}
  \label{doublenoR2}
  \end{center}
\end{figure}

\section{The Chamonix Valley}

In \cite{JCP} the temperature in the Chamonix Valley due to sunlight was studied.  With units in 10km, the emitting domain (the ground) is a rectangle $[-0.2,3.32]\times[-3.35,0.163]$, with the Chamonix city at $(1.5,-1.5)$. The 3D domains is the emitting domain extruded above the ground up to $z=1$, i.e. 10km altitude.
The Mont Blanc, in the lower left part of the map is 4807m high. The domain is discretized into tetraedra by a 3D automatic mesh generator from a surface mesh.

Naturally the results are affected by the domain truncation because points near the boundaries receive less than half the scattered light.  Now RC can be applied to the 4 vertical planes of the truncation. Note that near the corners there is still a light deficiency which could be corrected by allowing multiple reflections (a programming challenge).

\subsection{Settings}
The ground surface radiates, proportionally to the vertical component of the normal $\vn_z$, the light of a black body at temperature 300°C at all frequencies (but mostly infrared). The intensity was set at $Q^0=2.5$ so as to obtain meaningful temperatures, but since the Earth is not in thermal equilibrium with the sunlight it receives, this choice is arbitrary.  In any case rescaling is easily done as $J$ is proportional to $Q^0$. 

All mountains are covered with snow above 2500m.  The snow-covered ground emits only $0.3Q^0$.

The ground surface mesh has 95K vertices and the volume mesh has 786K vertices.  In general 12 fixed point iterations are sufficient to find the temperature $T$ from $J_\nu$ and these decrease the error by 6 orders of magnitude.

The surface-to-volume matrix  compressed to level 0.942. The volume-to-volume matrix compressed to level 0.982.

\subsection{Test 1: the Grey Case}

In this test $\kappa$ depends on the altitude, but not on $\nu$: $\kappa=\tfrac12(1-a z)$, with $a=\tfrac34$, except in the cloud. The cloud is a layer between altitude $z_m=0.3$, i.e; 3000m and $z_M=0.7$, i.e.7000m where $\kappa$ is multiplied by a Gaussian random number of means 0.2 and variance 0.8.
Scattering is only in the cloud with $a=0.3(z-z_m)_+(z_M-z)_+/(4(z_M-z_m)^2$.

\smallskip

The program ran on a French national supercomputer in 5'42" using 1920 processors and 12 threads per MPI process. The surface-to-volume matrix was constructed in 38.5" with RC and 13.53 without. The volume-to-volume matrix took 145" with RC and 95.6 without. 
\vskip-1cm
\begin{figure}[H]
\begin{center}
\vskip-1cm
\includegraphics[width=6cm, clip, trim = {5cm 5cm 0cm 5cm}]{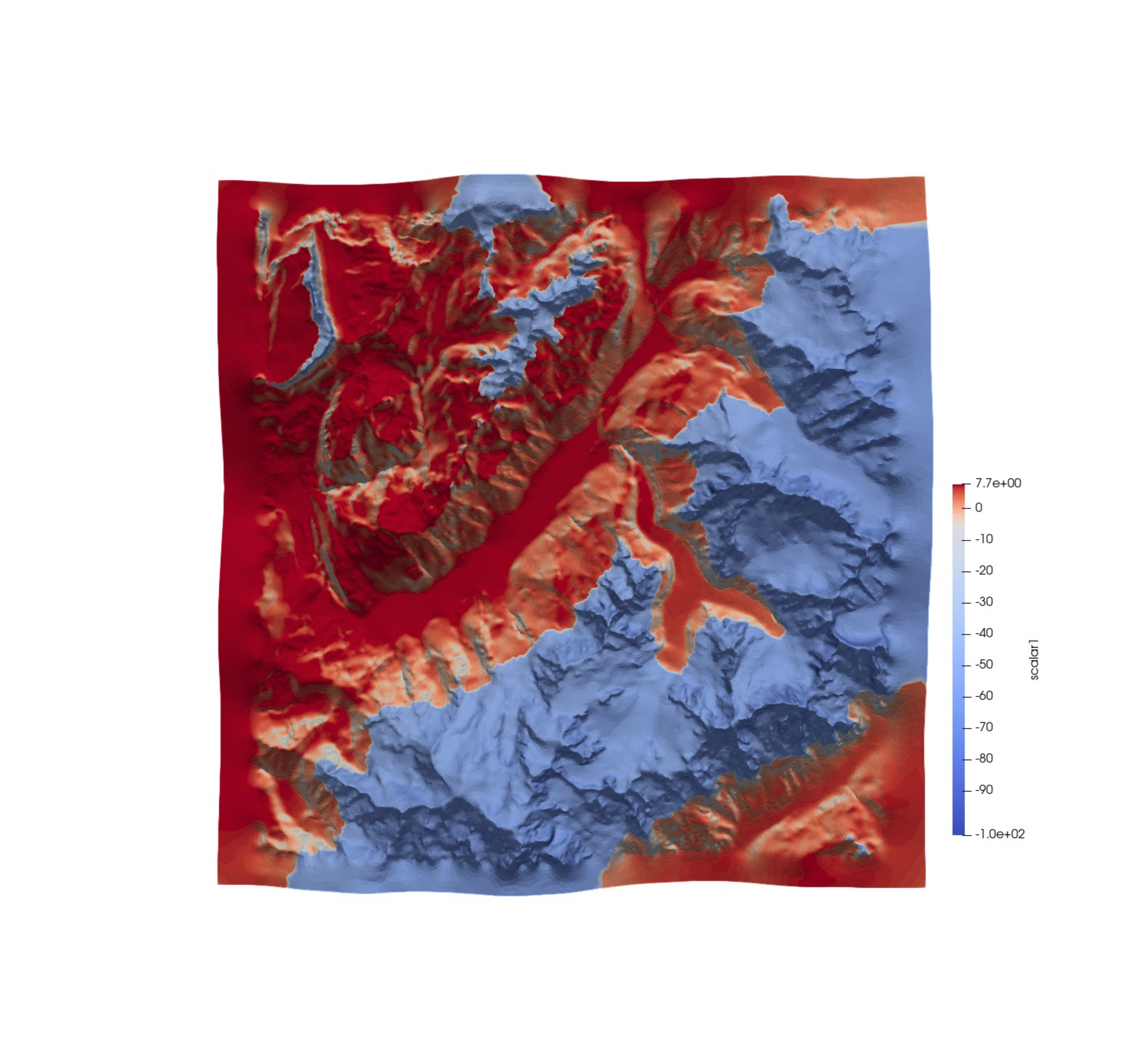}
\includegraphics[width=6cm, clip, trim = {5cm 5cm 0cm 5cm}]{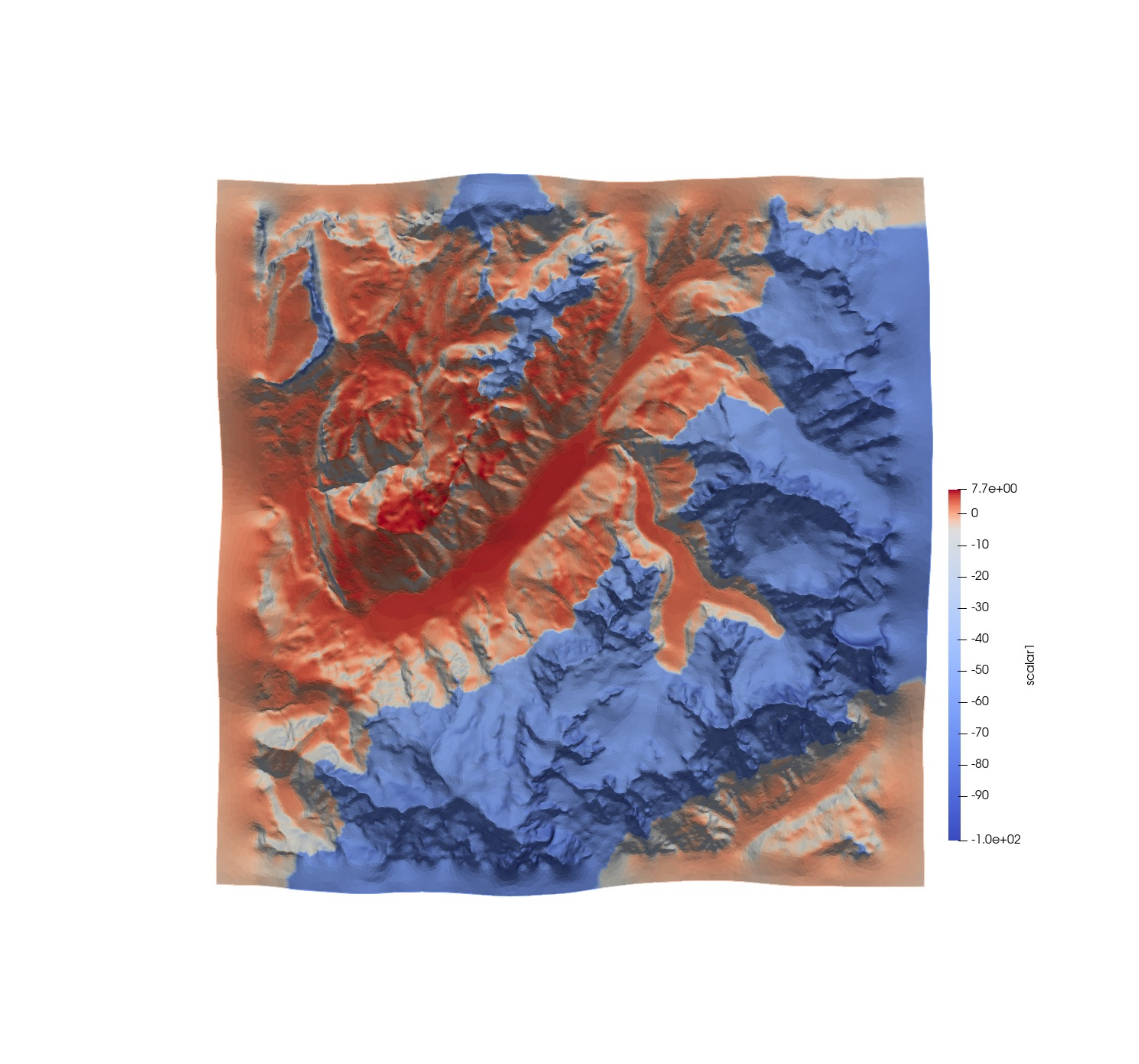}
\includegraphics[width=6cm, clip, trim = {5cm 5cm 0cm 5cm}]{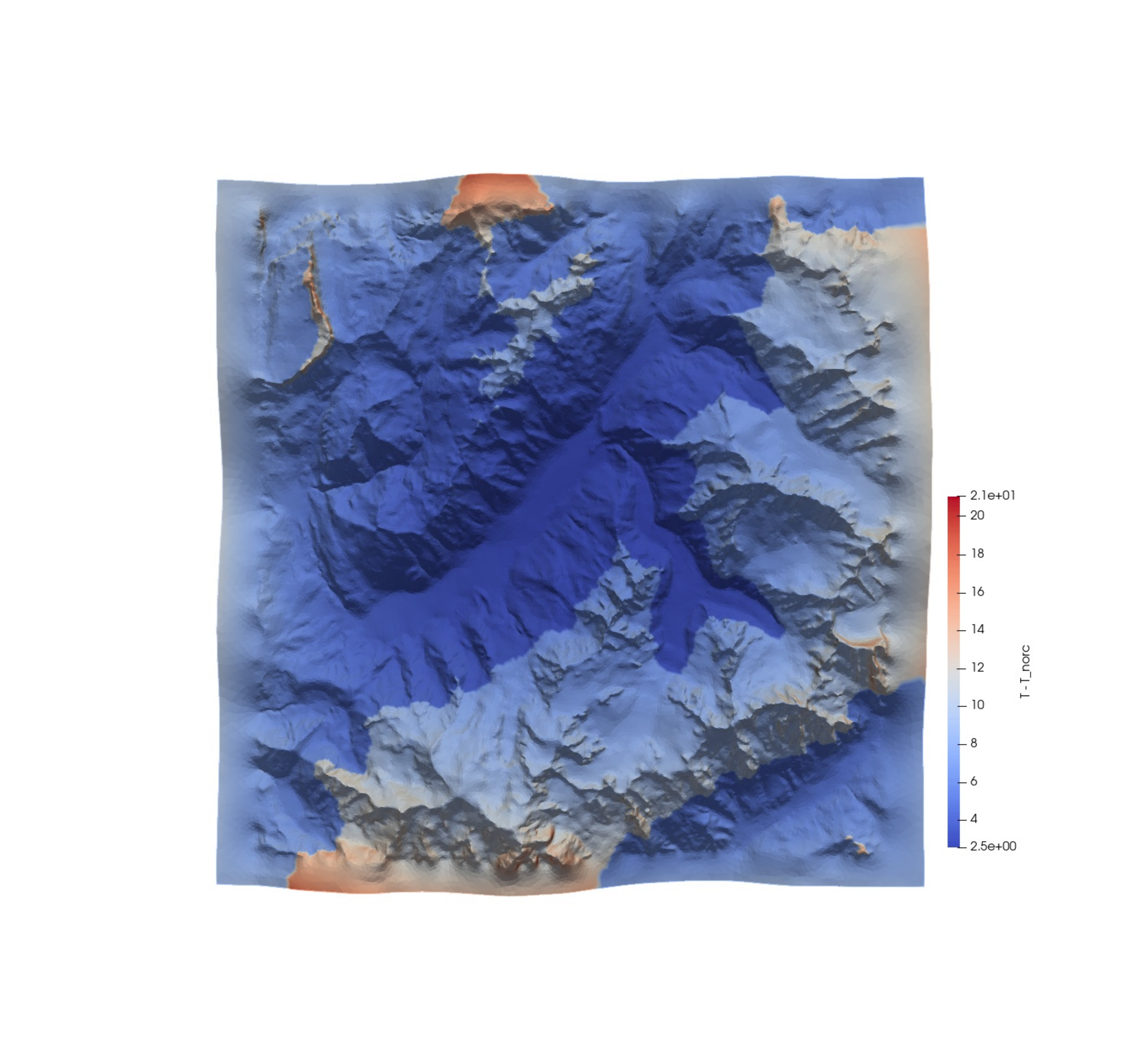}
\caption{Ground temperatures (in $^o$C) computed with RC on the 4 vertical boundaries (left) and without them (right). The third figure displays the difference $T$ with RC minus $T$ without RC.}
\label{cham3D}
\end{center}
\end{figure}
%
%
%
\begin{figure}[H]
\begin{center}
\includegraphics[width=12cm]{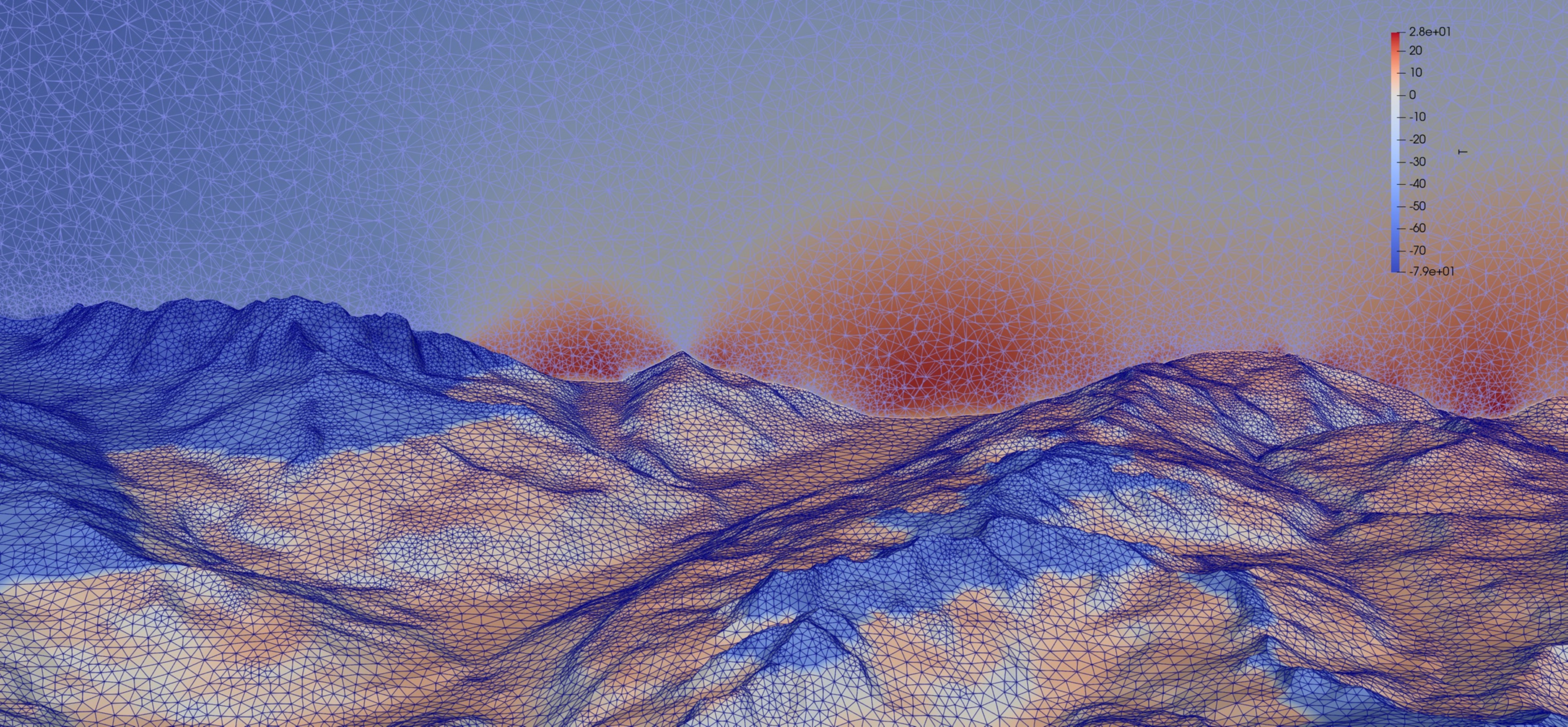}
\caption{Ground and vertical temperatures (in $^o$C) computed with RC in the valley of Chamonix. The mesh is shown in blue on the ground and the intersection of the mesh with the vertical plane is shown in white.}
\label{cutCham}
\end{center}
\end{figure}

Figure \ref{cham3D} shows the computed temperatures (in Celcius) on the ground with and without RC.  The difference between the two temperature fields is also displayed: it is noticeably hotter everywhere by a few degrees when computed  with RC.
Finally, Figure \ref{cutCham} shows the mesh and the temperatures on the ground and on a vertical plane accross the Chamonix valley.  The parabolic shape of the mountains increases the temperatures above the ground.  

\subsection{Test 2: the General Case}
The setting does not change except that now  $\kappa$ varies with altitude as before but with $a=\tfrac34$ and its dependance on $\nu$ is read from the Gemini website \cite{gemini} (see \cite{JCP} for details).  583 points are needed to discretize $\nu$, but only 8 values are retained for a piecewise discontinuous approximation of $\kappa_\nu$ in the exponentials in the matrices. 
The computing time is roughly 8 times that of the grey case.

The temperature versus altitude above the Chamonix city is plotted on Figure \ref{Tz}.  The sudden temperature increase just above the ground is persistent with mesh refinement near the ground. The same computation was done in the same domain with the same mesh but on a flat ground $z=0$ (the domain is parallelipedic). Then there is no sudden increase, implying that the sudden increase is due to the radiation in a U-shaped valley.
%
%
\begin{figure}[H]
\begin{center}
\begin{minipage} [b]{0.45\textwidth}
\begin{tikzpicture}[scale=0.7]
\begin{axis}[legend style={at={(1,1)},anchor=north east}, compat=1.3,
   xmin=0.1,
   xlabel= {Altitude ($1=10\,000$m)},
  ylabel= {T ($^oC$)}
  ]
\addplot[thick,solid,color=red,mark=none, mark size=1pt] table [x index=0, y index=1]{fig/noonScloudScattKtempe2.txt};
\addlegendentry{Test2}
\addplot[thick,dotted,color=red,mark=none, mark size=1pt] table [x index=0, y index=1]{fig/noonScloudScattKKtempe2.txt};
\addlegendentry{Test2+\texttt{CO}$_2$}
\addplot[thick,solid,color=blue,mark=none, mark size=1pt] table [x index=0, y index=1]{fig/noonScloudScattKtempe.txt};
\addlegendentry{Flat 3D}
\addplot[thick,dotted,color=blue,mark=none, mark size=1pt] table [x index=0, y index=1]{fig/noonScloudScattKKtempe.txt};
\addlegendentry{Flat 3D+\texttt{CO}$_2$}
\addplot[thick,solid,color=green,mark=none, mark size=1pt] table [x index=0, y index=1]{fig/temperature00.txt};
\addlegendentry{Flat 1D}

\end{axis}
\end{tikzpicture}
\caption{\label{Tz}  Values of $T(1.5,-1.5,z)$ versus $z$ for Chamonix and for a flat ground. In both case, the dotted curves are the results with $\kappa$ modified by adding \texttt{CO}$_2$.}
\end{minipage}
\hskip 0.5cm
\begin{minipage} [b]{0.45\textwidth}
\begin{tikzpicture}[scale=0.7]
\begin{axis}[legend style={at={(1,1)},anchor=north east}, compat=1.3,
   xmin=0,xmax=40,ymax=45,
   xlabel= {wavelength $3/\nu$},
  ylabel= {\color{red}$10\times A_\nu$~~~~~~~~\color{blue}$10^5\times J(1.5,-1.5,0.3)$\color{black}}
  ]
\addplot[thick,solid,color=blue,mark=none, mark size=1pt] table [x index=0, y index=1]{fig/noonScloudScattKlight.txt};
\addlegendentry{Total light $J$}
\addplot[thick,solid,color=magenta,mark=none, mark size=1pt] table [x index=0, y index=1]{fig/gemini10absorption.txt};
\addlegendentry{$A_\nu:=1-\e^{-\kappa_\nu}$ }
\addplot[thick,dashed,color=blue,mark=none, mark size=1pt] table [x index=0, y index=1]{fig/gemini11absorption.txt};
\addlegendentry{$A_\nu$+\texttt{CO}$_2$}
\end{axis}
\end{tikzpicture}
\caption{\label{light}  $\kappa_\nu$ from the Gemini measurements and $\kappa_\nu$ modified by adding \texttt{CO}$_2$.  The solid blue curve  is $J(1.5,-1.5,0.5)$versus  wavelength $c/\nu$.}
\end{minipage}
\end{center}
\end{figure}
In a second computation the Gemini values for $\kappa_\nu$ are modified to be 1 in the range  $\nu=(3/18,3/14)$ to simulate an increase of \texttt{CO}$_2$ in the atmosphere. On Figure \ref{Tz}, it is seen that, in this configuration, the \texttt{CO}$_2$ increase the temperature near the ground and decrease it at high altitude. 

 The light intensity $J$ at $(1.5,-1.5,0.5)$ versus wavelength, $c/\nu$, ($c\approx 3$ is the scaled speed of light), is plotted on Figure \ref{light}. Notice that the computation captures complex details due to the discontinuities of $\nu\mapsto\kappa_\nu$.


\section*{Conclusion}
Compressed H-matrices is an ideal tool for RTE in integral form because the complexity of the method is $O(N\sqrt[3]{N}\ln N)$ where $N$ is the number of vertices in the 3D mesh and because it can handle frequency dependent absorption and scattering coefficients at the expense of a finite number of compressed matrices and a finite number of matrix-vector products. 

In the present study, the integral nonlinear formulation of RTE studied in \cite{JCP} has been extended to handle reflective boundary conditions. The monotonicity property of the iterative solver is kept.  The discretization with the finite element method is the same. However it is much harder to write a general computer code because of the complexity of possible multiple reflections, as in ray tracing. Hence in this article the numerical validation has been done only for a finite number of plane reflective boundaries and with at most one reflection per ray.  For the academic test case and for the Chamonix valley  it is essential to add reflective conditions for accuracy.

\subsection*{Acknowledgement}
We would like to thank warmly Fr\'ed\'eric Hecht,  for his constant good will to adapt \texttt{FreeFEM} to our needs. The \texttt{FreeFEM++} app can be downloaded from
{\footnotesize \texttt{www.freefem.org}
}

Some computations were made on the machine \texttt{Joliot-Curie} of the national computing center TGCC-GENCI  under allocation A0120607330.

\bibliographystyle{plain}
\bibliography{references}

\begin{thebibliography}{10}

\bibitem{beb}
M.~Bebendorf.
\newblock Approximation of boundary element matrices.
\newblock 86(4):565--589, 2000.

\bibitem{beben}
M.~Bebendorf.
\newblock {\em Hierarchical Matrices}.
\newblock Lecture notes in science and engineering. Springer, Heidelberg, 2008.

\bibitem{borm}
S.~Boerm, L.~Grasedyck, and W.~Hackbusch.
\newblock Hybrid cross approximation of integral operators.
\newblock {\em Numerische Mathematik}, 10(12):221--249, 2005.

\bibitem{fan}
Yuwei Fan, Jing An, and Lexing Ying.
\newblock Fast algorithms for integral formulations of steady-state radiative
  transfer equation.
\newblock {\em J. Comp. Physics}, 380(1):191--211, 2019.

\bibitem{JCP}
F.~Golse, F.~Hecht, O.~Pironneau, D.~Smetz, and P.-H. Tournier.
\newblock Radiative transfer for variable 3d atmospheres.
\newblock {\em J. Comp. Physics}, 475(111864):1--19, 2023.

\bibitem{DIA}
F.~Golse and O.~Pironneau.
\newblock Radiative transfer in a fluid.
\newblock {\em RACSAM, Springer}, Volume dedicated to I.
  Diaz(doi.org/10.):https://doi.org/10.1007/s13398--022--01362--x, 2022.

\bibitem{FGOP3}
F.~Golse and O.~Pironneau.
\newblock Stratified radiative transfer in a fluid and numerical applications
  to earth science.
\newblock {\em SIAM Journal on Numerical Analysis}, 60(5):2963--3000, 2022.

\bibitem{hack}
W.~Hackbusch.
\newblock A sparse matrix arithmetic based on h-matrices. part i: Introduction
  to h-matrices.
\newblock {\em Computing}, 62(2):89--108, 1999.

\bibitem{FF}
F.~Hecht.
\newblock New developments in freefem++.
\newblock {\em J. Numer. Math.}, 20:251--265, 2012.

\bibitem{JOL}
P.~Jolivet, M.A. Badri, and Y.~Favennec.
\newblock Deterministic radiative transfer equation solver on unstructured
  tetrahedral meshes: Efficient assembly and preconditioning.
\newblock {\em Journal of Computational Physics}, 437:110313, 2021.

\bibitem{koba}
K.~Kobayashi, N.~Sugimura, and Y.~Nagaya.
\newblock 3d radiation transport benchmark problems and results for simple
  geometries with void region.
\newblock {\em Prog. Nucl. Energy}, 39:119--144, 2001.

\bibitem{gemini}
S.D. Lord.
\newblock Earth atmosphere transmittance measurements.
\newblock Technical report, NASA Technical Memorandum 103957,
  www.gemini.edu/observing/telescopes-and-sites/sites\#Transmission, 1992.

\bibitem{MIH}
Dimitri {Mihalas} and Barbara {Weibel Mihalas}.
\newblock {\em {Foundations of radiation hydrodynamics}}.
\newblock New York, Oxford: Oxford University Press, 1984.

\bibitem{POM}
G.~Pomraning.
\newblock {\em The equations of Radiation Hydrodynamics}.
\newblock Pergamon Press, NY, 1973.

\bibitem{rjasanow}
S.~Rjasanow and O.~Steinbach.
\newblock {\em The Fast Solution of Boundary Integral Equations}.
\newblock Mathematical and Analytical Techniques with Applications to
  Engineering. Springer, Heidelberg, 2007.

\bibitem{sauter}
S.~Sauter and C.~Schwab.
\newblock {\em Boundary Element Methods}, volume~39 of {\em Springer Series in
  Computational Mathematics}.
\newblock Springer, Heidelberg, 2011.

\bibitem{siewert}
C.~Siewert.
\newblock On radiative-transfer problems with reflective boundary conditions
  and internal emission.
\newblock {\em Journal of Applied Mathematics and Physics ZAMP}, 35:144--155,
  1984.

\bibitem{tarvainen}
T.~Tarvainen, M.~Vauhkonen, V.~Kolehmainen, and J.~P. Kaipio.
\newblock Hybrid radiative-transfer-diffusion model for optical tomography.
\newblock {\em Applied optics}, 44(6):876--886, 2005.

\end{thebibliography}

\newpage

\section*{Declarations}
\begin{itemize}
\item As to the specific input of each author,  P.-H. Tournier provided the ${\cal H}$-matrices part and the interface between \texttt{htool} and \texttt{FreeFEM}; the rest of the program and the theory were done together.
\item The authors have no relevant financial or non-financial interests to disclose.
\item The authors have no conflicts of interest to declare that are relevant to the content of this article.
\item All authors certify that they have no affiliations with or involvement in any organization or entity with any financial interest or non-financial interest in the subject matter or materials discussed in this manuscript.
\item The authors have no financial or proprietary interests in any material discussed in this article.
\end{itemize}

\newpage

{\Large Entretien avec Jean-Louis Dufresne le 31/3/2023}

\begin{itemize}
\item Compute absorption $A$ or transmission $T$, $A,T\in(0,1)$.
\[
1 - A =T=\e^{\kappa_l\rho l} =\e^{\kappa_m\rho m} 
\]
where $\kappa_m$ has dimension $m^2/kg$ and $\kappa_l\sim m$ called absorption coefficients.

\item  With RT only, $T\mapsto Z$ is dcreasing.  With adiabatic behavior it is a straight line of slope -10K/km (or 6K/km).  See 
Thermal Equilibrium of the Atmosphere with a Given Distribution of Relative Humidity
Syukuro Manabe and Richard T. Wetherald
Journal of the Atmospheric Sciences, Volume 24: Issue 3,Page(s): 241–259
\begin{verbatim}
https://journals.ametsoc.org/view/journals/atsc/
24/3/1520-0469_1967_024_0241_teotaw_2_0_co_2.xml?tab_body=pdf
\end{verbatim}
\item Vents catabatiques
\begin{verbatim}
https://en.wikipedia.org/wiki/Katabatic_wind
\end{verbatim}

\item Clouds : Importance of polarization not clear, (see the LOA lab of Lille).
Scattering due to microparticles:  more important in the direction of the light in front of the particle.

\item Integration of function with respect to $\nu$.  Use Malkmus Narrow-band model. See 

\begin{verbatim}
https://doi.org/10.1364/JOSA.57.000323
https://doi.org/10.1016/S0022-4073(97)00214-8
\end{verbatim}

\end{itemize}
\end{document}